\newcommand{\ten}[1]{\mathcal{#1}}  % Tensor in Caligraphical font
\DeclareMathOperator{\spn}{opspan}
\newtheorem{algorithm}{Algorithm}[section]
\title{A Constructive Algorithm for Decomposing a Tensor into a Finite Sum of Orthonormal Rank-1 Terms \thanks{This work was supported in part by the Hong Kong Research Grants Council under General Research Fund (GRF) Projects 718213E and 17208514, and the University Research Committee of The University of Hong Kong.}}
\author{Kim Batselier, Haotian Liu,
        \and Ngai Wong \thanks{Department of Electrical and Electronic Engineering, The University of Hong Kong, Hong Kong}}
\begin{document}

\maketitle

\begin{abstract}
We propose a constructive algorithm that decomposes an arbitrary real tensor into a finite sum of orthonormal rank-1 outer products. The algorithm, named TTr1SVD, works by converting the tensor into a tensor-train rank-1 (TTr1) series via the singular value decomposition (SVD). TTr1SVD naturally generalizes the SVD to the tensor regime with properties such as uniqueness for a fixed order of indices, orthogonal rank-1 outer product terms, and easy truncation error quantification. Using an outer product column table it also allows, for the first time, a complete characterization of all tensors orthogonal with the original tensor. Incidentally, this leads to a strikingly simple constructive proof showing that the maximum rank of a real $2 \times 2 \times 2$ tensor over the real field is 3. We also derive a conversion of the TTr1 decomposition into a Tucker decomposition with a sparse core tensor. Numerical examples illustrate each of the favorable properties of the TTr1 decomposition.%
\end{abstract}

\begin{keywords}
tensor decompositions, multiway arrays, singular values, orthogonal rank-1 terms, CANDECOMP/PARAFAC (CP) decomposition
\end{keywords}

% 15A69: Multilinear algebra, tensor products
% 15A18: Eigenvalues, singular values, and eigenvectors
% 15A23: Factorization of matrices
\begin{AMS}
15A69,15A18,15A23
\end{AMS}

\pagestyle{myheadings}
\thispagestyle{plain}
\markboth{KIM BATSELIER, HAOTIAN LIU, NGAI WONG}{TTr1 DECOMPOSITION}

\section{Introduction}
There has been a recent surge in the research and utilization of tensors, which are high-order generalization of matrices, and their low-rank approximations~\cite{TTB_Dense, candecomp, tensorreview, ivanTT,SAND2006-2081}. This is due to their natural form to capture high dimensional problems and their efficient compact representation of large-scale data sets.

Among various tensor decompositions, the CANDECOMP/PARAFAC (CP) decomposition\footnote{Originally introduced by Hitchcock \cite{Hitchcock1927}, the decomposition was rediscovered independently as CANDECOMP (CANonical DECOMPosition) by Carroll and Chang~\cite{candecomp}, and PARAFAC (PARAllel FACtors) by Harshman~\cite{harshman1970fpp}. The underlying algorithms are however the same.}~\cite{candecomp, harshman1970fpp, tensorreview} has found widespread use. CP expresses a tensor as the sum of a finite number of rank-1 tensors, called outer products, so that the tensor (CP-)rank can be defined as the minimum number of terms in the decomposition. Although CP is regarded as the generalization of the matrix singular value decomposition (SVD) to tensors, unlike matrices, there are no feasible algorithms to determine the rank of a specific tensor. Furthermore, most existing CP algorithms are optimization-based, such as the ``workhorse'' algorithm for CP: the alternating least squares (ALS)-CP method~\cite{candecomp}. ALS-CP minimizes the error between the original tensor and its rank-$R$ approximation (viz., sum of $R$ outer products) in an iterative procedure. The main problem of ALS-CP is that it only works by prescribing the rank $R$, therefore the procedure itself does not directly identify the tensor rank. Moreover, the outer products generated by ALS-CP are not orthogonal with each other unlike the case for matrix singular vectors.

Other tensor decompositions, for example the Tucker decomposition~\cite{candecomp, tuckerreview}, compress a tensor into a core tensor and several factor matrices. The Tucker decomposition of a tensor is not unique. One of its realizations can be efficiently computed by the higher-order SVD (HOSVD)~\cite{Lathauwer:HOSVD}. Each element in its core tensor can be deemed as the weight of a rank-1 factor. In this interpretation, all rank-1 factors of the Tucker decomposition are orthonormal. Nonetheless, the Tucker decomposition is not necessarily canonical and therefore cannot be used to estimate tensor ranks. 

To this end, a constructive orthogonal tensor decomposition algorithm, named tensor-train rank-1 (TTr1) SVD or TTr1SVD, is proposed in this paper. The recent introduction of the tensor-train (TT) decomposition~\cite{ivanTT} provides a constructive approach to represent and possibly compress tensors. Similar to the TT decomposition, the TTr1 decomposition reshapes and factorizes the tensor in a recursive way. However, unlike the TT decomposition, one needs to progressively reshape and compute the SVD of each singular vector to produce the TTr1 decomposition. The resulting singular values are constructed into a tree structure whereby the product of each branch is the weight of one orthonormal (rank-1) outer product. Most of the main properties and contributions of the TTr1 decomposition are highly reminiscent of the matrix SVD:
\begin{enumerate}
\item an arbitrary tensor is for a fixed order of the indices \emph{uniquely} decomposed into a linear combination of orthonormal outer products, each associated with a non-negative TTr1 singular value,
\item the approximation error of an $R$-term approximation is easily quantified in terms of the singular values,
\item numerical stability of the algorithm due to the use of consecutive SVDs,
\item characterizes the orthogonal complement tensor space that contains all tensors whose inner product is 0 with the original tensor $\ten{A}$. This orthogonal complement tensor space is, to our knowledge, new in the literature,
\item straightforward conversion of the TTr1 decomposition into the Tucker format with a sparse core tensor and orthogonal matrix factors.
\end{enumerate}

Having developed TTr1SVD, we found that its core routine turns out to be an independent re-derivation of the PARATREE algorithm~\cite{salmi2009sequential}. However, TTr1SVD bears the physical insight of enforcing a rank-1 constraint onto the TT decomposition~\cite{ivanTT}. Such a TT rank-1 perspective provides a much more straightforward appreciation of the favorable properties of this orthogonal SVD-like tensor decomposition. In particular, we provide a significantly more in-depth treatment of TTr1 decomposition than in~\cite{salmi2009sequential}, leading to important \emph{new results} such as a perturbation analysis of the singular values, a direct conversion of the TTr1 to the Tucker format featuring a sparse core tensor, and a full characterization of orthogonal complement tensors. Specifically, we introduce a TTr1-based tabulation of all orthogonal outer products that span a tensor $\ten{A}$, as well its orthogonal complement space $\spn(\ten{A})^\perp$ that is proposed for the first time in the literature. This permits, as an immediate application, an elegant and constructive proof that the rank of a real $2 \times 2 \times 2$ tensor over the real field is maximally 3. A Matlab/Octave implementation of our TTr1SVD algorithm can be freely downloaded and modified from \url{https://github.com/kbatseli/TTr1SVD}.

The outline of this paper is as follows. First, we introduce some notations and definitions in Section \ref{sec:notations}. Section \ref{sec:TTr1} presents a brief overview of the TT decomposition together with a detailed explanation of our TTr1 decomposition. Properties of the TTr1 decomposition such as uniqueness, orthogonality, approximation errors, orthogonal complement tensor space, perturbation of singular values and Tucker conversion are discussed in Section~\ref{sec:properties}. These properties are illustrated in Section \ref{sec:examples} by means of several numerical examples. Section \ref{sec:conclusions} concludes and summarizes the contributions.

\subsection{Notation and definitions}
\label{sec:notations}
We will adopt the following notational conventions. A $d$th-order tensor, assumed real throughout this paper, is a multi-way array $\ten{A} \in \mathbb{R}^{n_1\times n_2 \times \cdots \times n_d}$ with elements $\ten{A}_{i_1i_2\cdots i_d}$ that can be perceived as an extension of a matrix to its general $d$th-order, also called $d$-way, counterpart. We consider only real tensors because we adopt an application point of view. This is however without loss of generality, one could easily consider tensors over $\mathbb{C}$, which would require the replacement of the transpose by the conjugate transpose. Although the wordings `order' and `dimension' seem to be interchangeable in the tensor community, we prefer to call the number of indices $i_k\,(k=1,\ldots,d)$ the order of the tensor, while the maximal value $n_k\,(k=1,\ldots,d)$ associated with each index the dimension. A cubical tensor is a tensor for which $n_1=n_2=\cdots=n_d=n$. The $k$-mode product of a tensor $\ten{A} \in \mathbb{R}^{n_1\times n_2 \times \cdots \times n_d}$ with a matrix $U \in \mathbb{R}^{p_k \times n_k}$ is defined by
$$
(\ten{A} {_{\times_k}} U)_{i_1\cdots i_{k-1} j_k i_{k+1} \cdots i_d} \;=\; \sum_{i_k=1}^{n_k} U_{j_k i_k} \ten{A}_{i_1 \cdots i_k\cdots i_d},
$$
so that $\ten{A}{_{\times_k}} U \in \mathbb{R}^{n_1 \times \cdots \times n_{k-1} \times p_k \times n_{k+1} \times \cdots \times n_d}$. The inner product between two tensors $\ten{A},\ten{B} \in \mathbb{R}^{n_1\times \cdots \times n_d}$ is defined as
$$
\langle \ten{A},\ten{B} \rangle \;=\; \sum_{i_1,i_2,\cdots,i_d}\,\ten{A}_{i_1i_2 \cdots i_d}\,\ten{B}_{i_1i_2 \cdots i_d}.
$$
The norm of a tensor is taken to be the Frobenius norm $||\ten{A}||_F=\langle \ten{A},\ten{A} \rangle^{1/2}$. The vectorization of a tensor $\ten{A}$, denoted $\textrm{vec}(\ten{A}) \in \mathbb{R}^{n_1\cdots n_d}$, is the vector obtained from taking all indices together in one mode. A third-order rank-1 tensor can always be written as the outer product~\cite{tensorreview}
$$
\sigma\, (a \circ b \circ c) \quad \textrm{with components } \quad \ten{A}_{i_1 i_2 i_3}\;=\;\sigma\,a_{i_1}\,b_{i_2}\,c_{i_3}
$$
with $\sigma \in \mathbb{R}$ whereas $a$, $b$ and $c$ are vectors of arbitrary lengths as depicted in Figure~\ref{fig:rank1tensor}. Similarly, any $d$-way rank-1 tensor can be written as an outer product of $d$ vectors. Using the $k$-mode multiplication, this outer product can also be written as $\sigma {_{\times_1}} a {_{\times_2}} b {_{\times_3}}  c$ where $\sigma$ is now regarded as a $1\times 1\times 1$ tensor. In order to facilitate the discussion of the TTr1 decomposition we will make use of a running example tensor $\ten{A} \in \mathbb{R}^{3\times 4 \times 2}$ shown in Figure~\ref{fig:A}.%
\begin{figure}[ht]
\centering
\includegraphics[width=.18\textwidth]{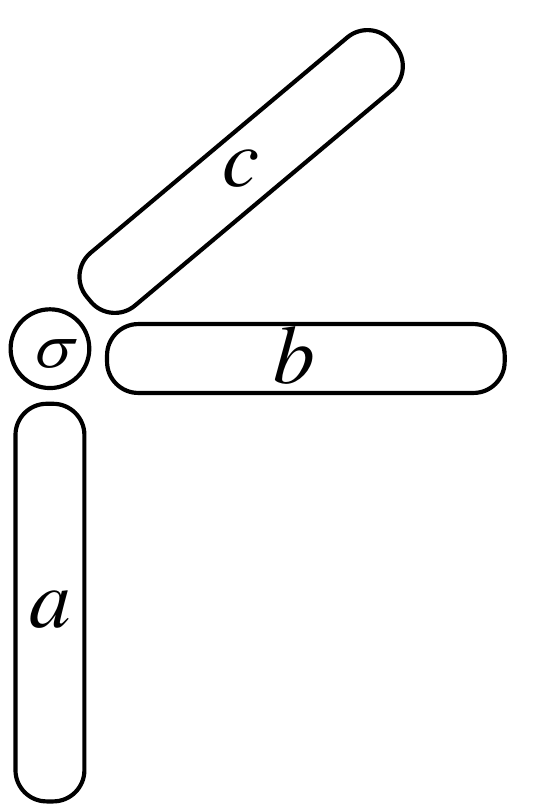}
\caption{The outer product of 3 vectors $a,b,c$ of arbitrary lengths forming a rank-1 outer product.}
\label{fig:rank1tensor}
\end{figure}
\begin{figure}[ht]
\begin{center}
\includegraphics[width=0.4\textwidth]{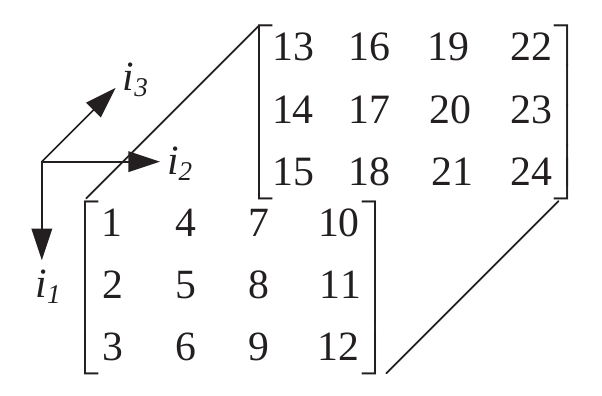}
\caption{A running $3\times 4\times 2$ tensor example.}
\label{fig:A}
\end{center}
\end{figure}

\section{TTr1 decomposition}
\label{sec:TTr1}
\subsection{TT decomposition}
Our decomposition is directly inspired by the TT decomposition \cite{ivanTT}, which we will succinctly review here. The main idea of the TT decomposition is to re-express a tensor $\ten{A}$ as
\begin{equation}
\ten{A}_{i_1 i_2 \cdots i_d} = \ten{G}_1(i_1) \, \ten{G}_2(i_2) \cdots \ten{G}_d(i_d),
\label{eq:TTdecomp}
\end{equation}
where for a fixed $i_k$ each $\ten{G}_k(i_k)$ is an $r_{k-1} \times r_k$ matrix, also called the TT core. Note that the subscript $k$ of a core $\ten{G}_k$ indicates the $k$th core of the TT decomposition. The ranks $r_k$ are called the TT ranks. Each core $\ten{G}_k$ is in fact a third-order tensor with indices $\alpha_{k-1},i_k,\alpha_k$ and dimensions $r_{k-1},n_k,r_k$, respectively. Since $\ten{A}_{i_1 i_2 \cdots i_d}$ is a scalar we obviously have that $r_0=r_d=1$ and for this reason $\alpha_0$ and $\alpha_d$ are omitted. Consequently, we can write the elements of $\ten{A}$ as
\begin{equation}
\ten{A}_{i_1 i_2 \cdots i_d} = \sum_{\alpha_1,\cdots,\alpha_{d-1}} \ten{G}_1(i_1 ,\alpha_1) \, \ten{G}_2(\alpha_1, i_2, \alpha_2) \cdots \ten{G}_d(\alpha_{d-1} ,i_d),
\label{eq:TTcomp}
\end{equation}
where we always need to sum over the auxiliary indices $\alpha_1,\ldots,\alpha_{d-1}$, and therefore \eqref{eq:TTcomp} is equivalent to the matrix product form in \eqref{eq:TTdecomp}. An approximation of $\ten{A}$ is achieved by truncating the $\alpha_k$ indices in \eqref{eq:TTcomp} at values smaller than the TT ranks $r_k$.

\begin{figure}[ht]
\centering
\includegraphics[width=.7\textwidth]{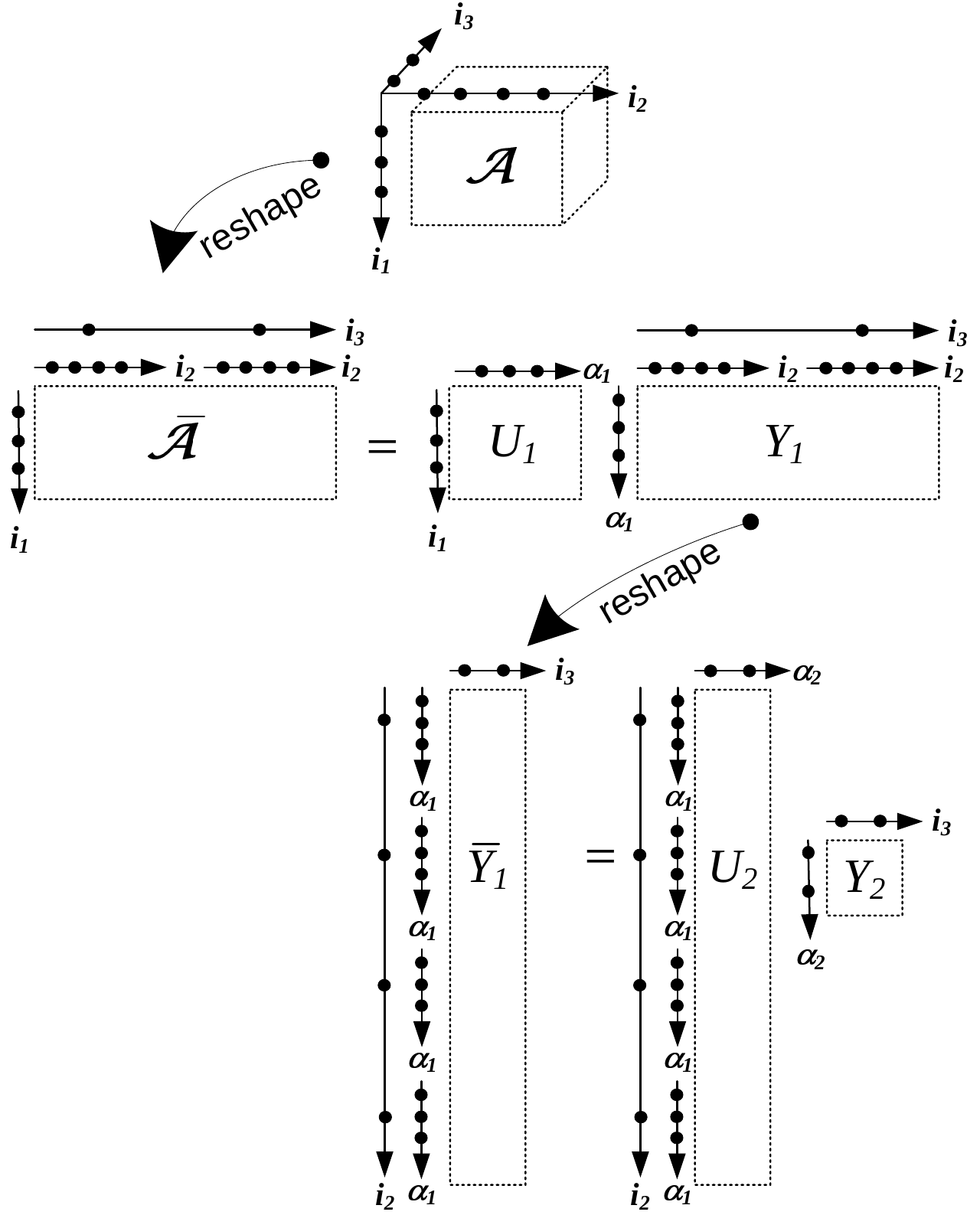}
\caption{{Computation of the TT decomposition of $\ten{A}$.}}
\label{fig:TTA1}
\end{figure}
\begin{figure}[ht]
\centering
\includegraphics[width=0.55\textwidth]{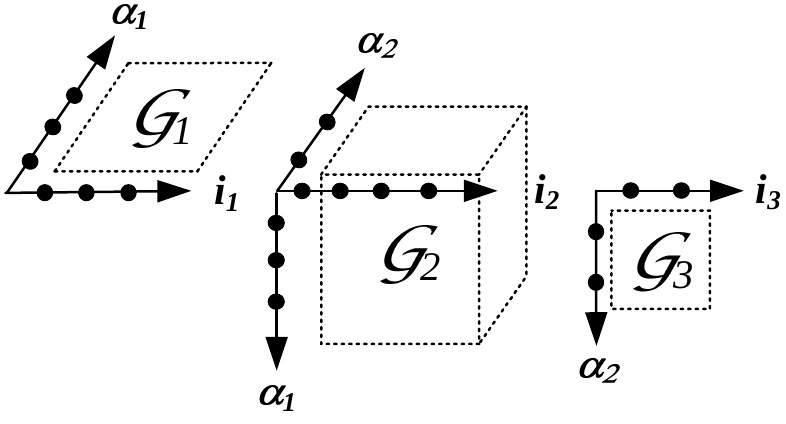}
\caption{{TT decomposition of $\ten{A}$. Each set of orthogonal axes represents a core $\ten{G}_k$ of the TT.}}
\label{fig:TTA}
\end{figure}

Computing the TT decomposition consists of doing $d-1$ consecutive reshapings and SVD computations. For our running example $\ten{A} \in \mathbb{R}^{3\times 4 \times 2}$ in Figure~\ref{fig:A}, this means that the decomposition is computed in 2 steps. This process is visualized in Figure~\ref{fig:TTA1}, whereby $\ten{A}$ will eventually be converted into its TT format in Figure~\ref{fig:TTA}. Referring to Figure~\ref{fig:TTA1}, the first reshaping of $\ten{A}$ into a matrix $\bar{\ten{A}}$ that needs to be considered is by grouping the indices $i_2,i_3$ together. This results in the $3\times 8$ matrix
$$
\bar{\ten{A}} \;=\;
\begin{pmatrix}
1 & 4 & 7 & 10 & 13 & 16 & 19 & 22\\
2 & 5 & 8 & 11 & 14 & 17 & 20 & 23\\
3 & 6 & 9 & 12 & 15 & 18 & 21 & 24
\end{pmatrix}.
$$ 
The ``economical'' SVD of the $3 \times 8$ matrix $\bar{\ten{A}}$ is then
\begin{equation}
\bar{\ten{A}}  \;=\; U_1\; S_1 \; V_1^T,
\label{ex:SVD1}
\end{equation}
with $U_1$ a $3\times 3$ matrix and $V_1$ an $8\times 3$ matrix. In fact, any dyadic decomposition can be used for this step in the TT algorithm, but the SVD is often chosen for its numerical stability. The first TT core $\ten{G}_1$ is given by the $3\times 3$ matrix $U_1$ indexed by $i_1, \alpha_1$. We now form the matrix $Y_1=S_1V_1^T$ and reshape it such that its rows are indexed by $\alpha_1, i_2$ and its columns by $i_3$. This results in a $12\times 2$ matrix $\bar{Y}_1$ and its SVD
$$
\bar{Y}_1 = U_2\; Y_2,
$$ 
with $Y_2=S_2V_2^T$. The second TT core $\ten{G}_2$ is then given by $U_2$, reshaped into a $3\times 4 \times 2$ tensor. The last TT core $\ten{G}_3$ is then $Y_2$, which is a $2\times 2$ matrix indexed by $\alpha_2$ and $i_3$. We therefore have that
$$
\ten{A}_{i_1 i_2 i_3} = \ten{G}_1(i_1) \; \ten{G}_2(i_2) \; \ten{G}_3(i_3),
$$
with $\ten{G}_1(i_1)$ a $1 \times 3$ row vector, $\ten{G}_2(i_2)$ a $3\times 2$ matrix, and $\ten{G}_3(i_3)$ a $2 \times 1$ column vector for fixed $i_1,i_2$ and $i_3$, respectively (cf. Figure \ref{fig:TTA}). Observe how the auxiliary indices $\alpha_1,\alpha_2$ serve as `links' connecting adjacent TT cores. For tensor orders $d\geq3$, besides the head and tail tensors $\ten{G}_1(i_1),\ten{G}_d(i_d)$ which are in fact matrices, there will be $(d-2)$ third-order TT cores in between.

\subsection{Tensor-Train rank-1 decomposition}
\label{sec:ttr1}
With the TT decomposition in place, we are now ready to introduce our TTr1 decomposition, which is easily understood from Figure \ref{fig:TTr1A}. The main idea of the TTr1 decomposition is to force the rank for each auxiliary index $\alpha_k$ link to unity, which gives rise to a linear combination of rank-1 outer products. We go back to the first SVD \eqref{ex:SVD1} of the TT decomposition algorithm and realize that we can rewrite it as a sum of rank-1 terms
\begin{equation}
\bar{\ten{A}} \;=\; \sum_{i=1}^{3} \, \sigma_i {_{\times_1}} u_i {_{\times_2}} v_i,
\label{eq:SVDsum1terms}
\end{equation}
where each vector $v_i$ is indexed by $i_2, i_3$. The next step in the TT decomposition would be to reshape $Y_1$ and compute its SVD. For the TTr1 decomposition we reshape each $v_i$ into an $i_2 \times i_3$ matrix $\bar{v}_i$ and compute its SVD. This allows us to write $\bar{v}_1$ also as a sum of rank-1 terms
$$
\bar{v}_1 \;= \; \sigma_{11} {_{\times_1}} u_{11} {_{\times_2}} v_{11} + \sigma_{12} {_{\times_1}} u_{12} {_{\times_2}} v_{12}.
$$
The same procedure can be done for $v_2$ and $v_3$: they can also be written as a sum of 2 rank-1 terms. Combining these 6 rank-1 terms we can finally write $\ten{A}$ as
\begin{align}
\label{ex:Xttr1}
\ten{A}  &=  \tilde{\sigma}_1  {_{\times_1}} u_{1} {_{\times_2}} u_{11} {_{\times_3}} v_{11} + \tilde{\sigma}_2{_{\times_1}} u_{1} {_{\times_2}} u_{12} {_{\times_3}} v_{12}  \\
&+ \tilde{\sigma}_3 {_{\times_1}} u_{2} {_{\times_2}} u_{21} {_{\times_3}} v_{21} +\tilde{\sigma}_4 {_{\times_1}} u_{2} {_{\times_2}} u_{22} {_{\times_3}} v_{22}  \nonumber \\
 & + \tilde{\sigma}_5 {_{\times_1}} u_{3} {_{\times_2}} u_{31} {_{\times_3}} v_{31} + \tilde{\sigma}_6 {_{\times_1}} u_{3} {_{\times_2}} u_{32} {_{\times_3}} v_{32} \nonumber ,
\end{align}
with $\tilde{\sigma}_1=\sigma_1\sigma_{11},\ldots,\tilde{\sigma}_6=\sigma_3\sigma_{32}$. Note the similarity of \eqref{ex:Xttr1} with \eqref{eq:SVDsum1terms}. The TTr1 decomposition has three main features that render it similar to the matrix SVD:
\begin{enumerate}
\item the scalars $\tilde{\sigma}_1,\ldots,\tilde{\sigma}_6$ are the weights of the outer products in the decomposition and can therefore be thought of as the singular values of $\ten{A}$,
\item the outer products affiliated with each singular value are tensors of unit Frobenius norm, since each product vector (or mode vector) is a unit vector, and
\item each outer product in the decomposition is orthogonal to all the others, which we will prove in Section \ref{sec:properties}.
\end{enumerate}
\begin{figure}[ht]
\centering
\includegraphics[width=.95\textwidth]{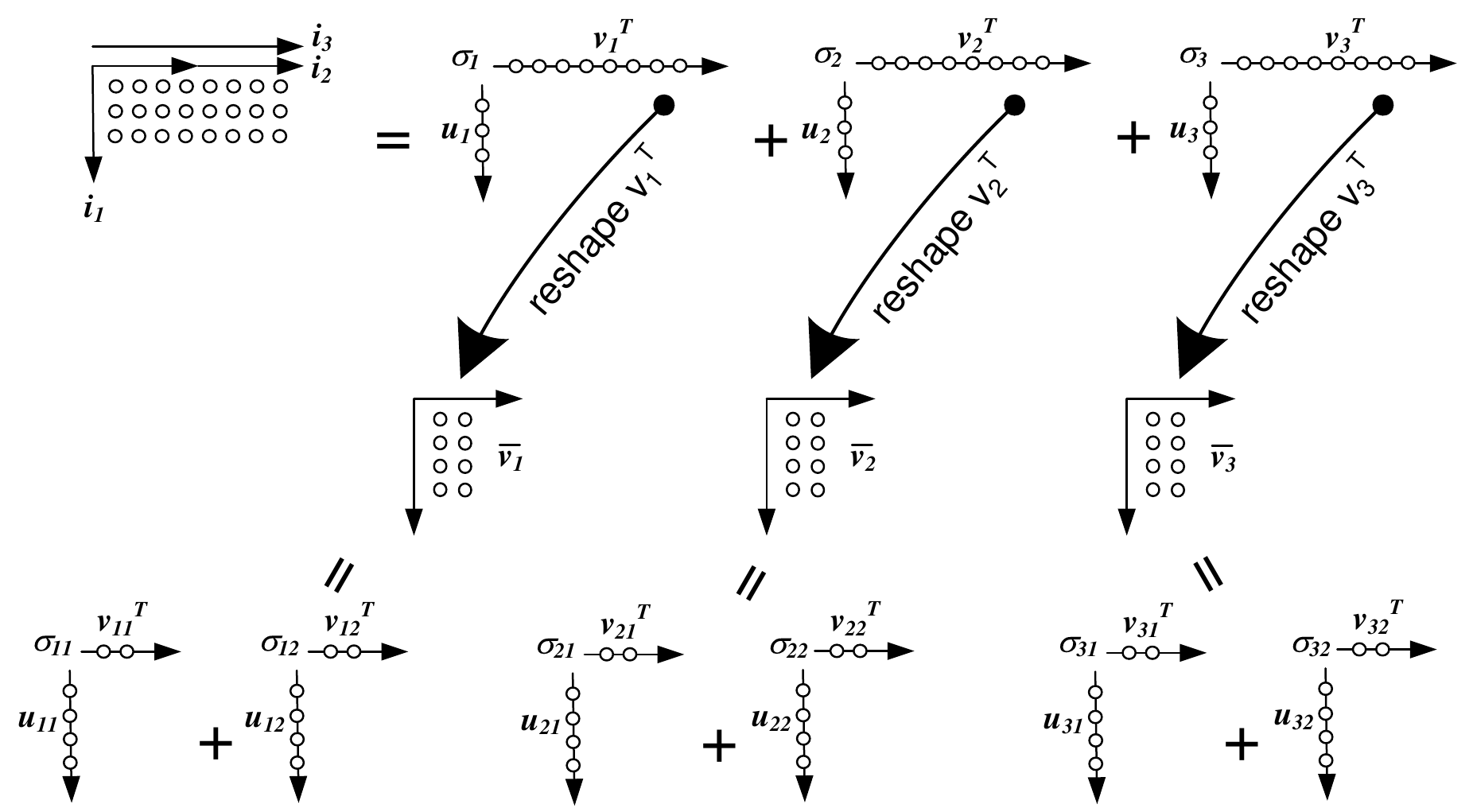}
\caption{{Computation of the TTr1 decomposition of $\ten{A}$.}}
\label{fig:TTr1A}
\end{figure}

\subsection{TTr1SVD algorithm}
As was shown in the previous subsection, computing the TTr1 decomposition requires recursively reshaping the obtained $v$ vectors and computing their SVDs. This recursive procedure gives rise to the formation of a tree, where each SVD generates additional branches of the tree. The tree for the TTr1 decomposition of $\ten{A}$ in~\eqref{ex:Xttr1} is shown in Figure \ref{fig:tree}. As denoted in the figure, we will call a row in the tree a level. Level 0 corresponds to the SVD of $\bar{\ten{A}}$ and generates the first level of singular values. This is graphically represented by the node at level 0 branching off into 3 additional nodes at level 1. The reshaping and SVD of the different $v$ vectors at level 1 then generates level 2 and so forth. Observe how the total number of subscript indices of the singular values are equal to the level at which these singular values occur. For example, $\sigma_2$ occurs at level 1 and $\sigma_{21}$ occurs at level 2. The number of levels for the TTr1 decomposition of an arbitrary $d$-way tensor is $d-1$. The final singular values $\tilde{\sigma}_i$'s are the product of all $\sigma$'s along a branch.
\begin{figure}[th]
\begin{center}
\input{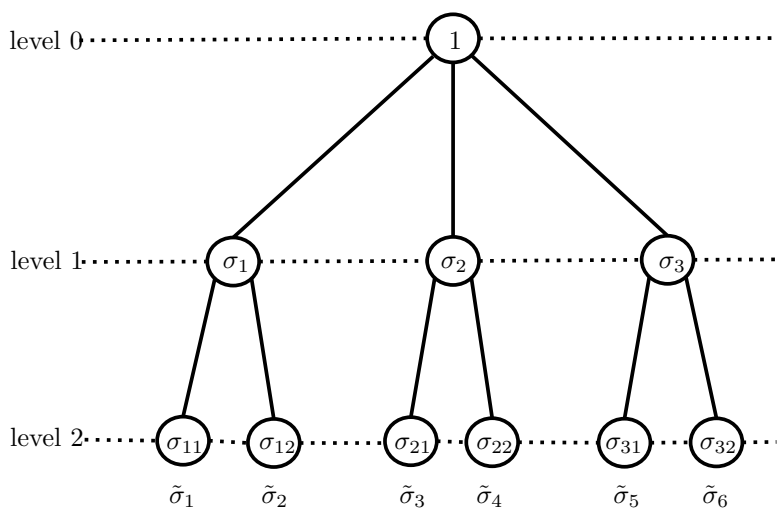}
\caption{Tree representation of TTr1 decomposition, where $\tilde{\sigma}_i$ is the product of all nodes down a branch.}
\label{fig:tree}
\end{center}
\end{figure}

The total number of terms in the decomposition are the total number of leaves. This number is easily determined. Indeed, each node at level $k$ of the tree branches off into
$$
r_k \;\triangleq \; \textrm{min}(n_{k+1},\prod_{i=k+2}^d n_i) \quad (k=0,\ldots,d-2),
$$
nodes. Hence, the total number of leaves or terms $N$ in the TTr1 decomposition is given by
$$
N\;=\;\prod_{k=0}^{d-2}\,r_k.
$$
The algorithm to compute the TTr1 decomposition is presented in pseudo-code in Algorithm \ref{alg:ttr1svd}. First the tensor $\ten{A}$ is reshaped into an $n_1\times \prod_{i=2}^d n_i$ matrix and its SVD is computed. The computational complexity for this first step is approximately $14\,n_1^2\, \prod_{i=2}^d n_i+8\,n_1^3$ flops. Observe that the computation of the TT or Tucker decomposition has a computational complexity of the same order of magnitude. Then for all remaining nodes in the tree, except for the leaves, the resulting $v_i$ vectors are reshaped into a matrix and their SVDs are also computed. The $U,S,V$ matrices for each of these SVDs are stored. Note that for levels 0 up to $d-2$ the $v$ vectors do not need to be stored. From the tree it is also easy to determine the total number of SVDs required to do the full TTr1 decomposition. This is simply the total number of nodes in the tree from level 0 up to $d-2$ and equals
$$
1 + \sum_{i=0}^{d-3} \, \prod_{k=0}^i r_k .
$$
Assuming that $r_k=n_k$ for all $k$, then the total number of SVDs required for computing the TTr1 decomposition of a cubical tensor is
$$
1+n+n^2+\cdots+n^{d-2} \;=\; \frac{1-n^{d-1}}{1-n}.
$$
This exponential dependence on the order of the tensor and the computational complexity of $O(n_1^2\, \prod_{i=2}^d n_i)$ for the first SVD are the two major limiting factors to compute the TTr1 decomposition. Note, however, that the tree structure is perfectly suited to do all SVD computations that generate the next level in parallel, and in that case the runtime is linearly proportional to the number of levels. However, such an implementation requires an exponential growing number of computational units.\\
\\
\framebox[.9\textwidth][l]{\begin{minipage}{0.9\textwidth}
\begin{algorithm}Tensor-Train rank-1 SVD Algorithm  (TTr1SVD)\\
\textit{\textbf{Input}}: arbitrary tensor $\ten{A}$\\
\textit{\textbf{Output}}: $U,S,V$ matrices of each SVD
\begin{algorithmic}
\STATE $\bar{\ten{A}} \gets$ reshape $\ten{A}$ into an $n_1\times \prod_{i=2}^d n_i$ matrix
\STATE $U_1,S_1,V_1 \gets$ SVD($\bar{\ten{A}}$)
\FOR{all remaining nodes in the tree except the leaves}
\STATE $\bar{v}_i \gets$ reshape $v_i$
\STATE $U_k,S_k,V_k \gets$ SVD($\bar{v}_i$)
\STATE add $U_k,S_k,V_k$ to $U,S,V$
\ENDFOR
\end{algorithmic}
\label{alg:ttr1svd}
\end{algorithm}
\end{minipage}
}

\section{Properties}
\label{sec:properties}
We now discuss many attractive properties of the TTr1 decomposition. Most of these properties are also shared with the matrix SVD and it is in this sense that the TTr1SVD is a natural generalization of the SVD for tensors.

\subsection{Uniqueness}
A first attractive feature of the TTr1 decomposition is that it is uniquely determined for a fixed order of indices. This means that for any given arbitrary tensor $\ten{A}$ its TTr1 decomposition will always be the same. Indeed, Algorithm \ref{alg:ttr1svd} consists of a sequence of SVD computations so the uniqueness of the TTr1 decomposition follows trivially from the fact that each of the SVDs in Algorithm \ref{alg:ttr1svd} are unique up to sign. Although the singular values and vectors of a matrix $A$ and its transpose $A^T$ are the same, this is not the case for the TTr1SVD. Indeed, applying a permutation of the indices $\pi(i_1,\ldots,i_n)$ will generally result in a different TTr1SVD, which we illustrate in Example \ref{ex:ex1}. Once the indices are fixed however, the TTr1SVD algorithm will always return the same decomposition, which is not the case for conventional iterative optimization-based methods.

\subsection{Orthogonality of outer products}
Any two rank-1 terms $\tilde{\sigma}_i \ten{T}_i$ and $\tilde{\sigma}_j \ten{T}_j$ of the TTr1 decomposition are orthogonal with respect to one another, which means that $\langle \ten{T}_i, \ten{T}_j \rangle=0$. We will use our running example to show why this is so. Let us take two terms of \eqref{ex:Xttr1}, for example $\ten{T}_1=1{_{\times_1}} u_{1} {_{\times_2}} u_{11} {_{\times_3}} v_{11}$ and $\ten{T}_2=1 {_{\times_1}} u_{1} {_{\times_2}} u_{12} {_{\times_3}} v_{12}$. Another way of writing $\langle \ten{T}_1, \ten{T}_2 \rangle$ is
$$
\langle \ten{T}_1, \ten{T}_2 \rangle \;=\; \left( v_{11} \otimes u_{11} \otimes u_1 \right)^T \left(v_{12} \otimes u_{12} \otimes u_1 \right)
$$
where $\otimes$ denotes the Kronecker product. These Kronecker products generate the vectorization of each of the rank-1 tensors $\ten{T}_1, \ten{T}_2$, which allows us to easily write down their inner product as an inner product between two mode vectors. Applying properties of the Kronecker product we can now write
\begin{align*}
\left(v_{11} \otimes u_{11} \otimes u_1 \right)^T \left(v_{12} \otimes u_{12} \otimes u_1 \right) &= \left(v_{11}^T \otimes u_{11}^T \otimes u_1^T \right) \left(v_{12} \otimes u_{12} \otimes u_1 \right),\\
 &= \left(v_{11}^Tv_{12}  \otimes u_{11}^T u_{12} \otimes u_1^Tu_1 \right),
\end{align*}
where it is clear that the right hand side vanishes due to the orthogonality $v_{11}^T v_{12}=u_{11}^T u_{12}=0$. This property generalizes to any tensor $\ten{A}$. Indeed, if any two rank-1 terms do not originate from the same node at level 1, then their respective $u_i,u_j$ vectors are orthogonal and ensure that their inner product vanishes. If the two rank-1 terms do originate from the same node at level 1 but from different nodes at level 2, then their $u_{ij},u_{ik}$ vectors are orthogonal and again the inner product will vanish. This reasoning extends up to level $d-1$. If any two terms have their first separate nodes at level $k\in [1,d-1]$, then their corresponding $u$ vectors at level $k$ will also be orthogonal. The tree structure, together with the orthogonality of all $u$ vectors that share a same parent node hence guarantees that any two rank-1 outer factors in the TTr1 decomposition are orthogonal. The TTr1 decomposition is hence an orthogonal decomposition as defined in~\cite{Kolda2001}. %More difficult to prove is that the TTr1 decomposition is a orthogonal rank decomposition in the sense that the number of orthogonal rank-1 terms in \eqref{ex:Xttr1} is minimal.

\subsection{Upper bound on the orthogonal tensor rank}
\label{subsec:upperbound}
The (CP-)rank of an arbitrary $d$-way tensor $\ten{A}$ is usually defined similarly to the matrix case as the minimum number of rank-1 terms that $\ten{A}$ decomposes into.
\begin{definition}
The rank of an arbitrary $d$-way tensor $\ten{A}$, denoted $\textrm{rank}(\ten{A})$, is the minimum number of rank-1 tensors that yield $\ten{A}$ in a linear combination.
\end{definition}

In \cite{Kolda2001,Leibovici1998} the orthogonal rank, rank$_\perp(\ten{A})$, is defined as the minimal number of terms in an orthogonal rank-1 decomposition. Apparently,
$$
\textrm{rank}(\ten{A}) \leq \textrm{rank}_\perp(\ten{A})
$$
where strict inequality is possible for tensors of orders $d > 2$.
The TTr1 decomposition allows a straightforward determination of an upper bound on rank$_\perp(\ten{A})$. Indeed, this is simply the total number of leaves in the tree and is therefore
\begin{equation}
\textrm{rank}_\perp(\ten{A}) \; \leq \; N\;=\;\prod_{k=0}^{d-2}\,r_k.
\label{eq:upperboundrank}
\end{equation}
Applying \eqref{eq:upperboundrank} to our running example $\ten{A} \in \mathbb{R}^{3\times 4 \times 2}$ we obtain
$$
\textrm{rank}_\perp(\ten{A}) \;\leq \; \textrm{min}(3,8) \cdot \textrm{min}(4,2) \;=\; 3 \cdot 2 \;=\; 6.
$$
For a cubical tensor with $n_1=\cdots=n_d=n$,~\eqref{eq:upperboundrank} then tells us that
$$
\textrm{rank}_\perp(\ten{A}) \; \leq \; \prod_{k=0}^{d-2} \textrm{min}(n, n^{d-1}) = \prod_{k=0}^{d-2}n = n^{d-1} .
$$
The dependency of the TTr1SVD on the ordering of the indices implies that a permutation of the indices can lead to different upper bounds on the orthogonal rank. Indeed, if we permute the indices of $\ten{A}$ to $\{i_2,i_3,i_1\}$ we get 
$$
\textrm{rank}_\perp(\ten{A})\;\leq \; \textrm{min}(4,6) \cdot \textrm{min}(2,3) \;=\; 4 \cdot 2 \;=\; 8.
$$
Consequently, there exists the notion of a minimum upper bound on the orthogonal rank of a tensor, obtained from computing the rank upper bounds through all permutations of indices. Whether the TTr1SVD algorithm is able to derive a minimal orthogonal decomposition needs further investigation. Furthermore, we will demonstrate by an example in Section \ref{sec:examples} that the orthogonality as it occurs in the TTr1 decomposition is not enough to make the problem of computing a low-rank approximation of an arbitrary tensor well-posed. This agrees with \cite{Vannieuwenhoven2014}, in which a necessary condition of pairwise orthogonality of all rank-1 terms in at least 2 modes is proved.

\subsection{Quantifying the approximation error}
As soon as the number of levels is large it becomes very cumbersome to write all the different subscript indices of the $u$ and $v$ vectors in the TTr1 decomposition. We therefore introduce a shorter and more convenient notation. Herein, $u_{ki}$ denotes the $u$ vector at level $k$ that contributes to the $i$th rank-1 term. Similarly, $v_i$ denotes the $v$ vector that contributes to the $i$th rank-1 term. The TTr1SVD algorithm decomposes an arbitrary tensor $\ten{A}$ into a linear combination of $N$ orthogonal rank-1 terms
\begin{equation}
\ten{A} \;=\; \sum_{i=1}^N \tilde{\sigma}_i {_{\times_1}} u_{1i} {_{\times_2}} u_{2i} {_{\times_3}} \cdots {_{\times_{d-1}}} u_{d-1 i} {_{\times_d}} v_{i},
\label{eq:ttr1}
\end{equation}
with
$$
||1 {_{\times_1}} u_{1i} {_{\times_2}} u_{2i} {_{\times_3}} \cdots {_{\times_{d-1}}} u_{d-1 i} {_{\times_d}} v_{i} ||_F = 1 \quad \textrm{ and } \quad 
N = \prod_{k=0}^{d-2} r_k.
$$
Suppose that we have ordered and relabeled the terms such that $\tilde{\sigma}_1 \geq \tilde{\sigma}_2 \geq \cdots \geq \tilde{\sigma}_N$. An $R$-term approximation is then computed by truncating \eqref{eq:ttr1} to the first $R$ terms
$$
\tilde{\ten{A}} \;=\;  \sum_{i=1}^R \tilde{\sigma}_i {_{\times_1}} u_{1i} {_{\times_2}} u_{2i} {_{\times_3}} \cdots {_{\times_{d-1}}} u_{d-1 i} {_{\times_d}} v_{i}.
$$
The following lemma tells us exactly what the error is when breaking off the summation at $R$ terms.
\begin{lemma}
Let $\tilde{\ten{A}}$ be the summation of the first $R$ terms in \eqref{eq:ttr1} then
$$
||\ten{A}-\tilde{\ten{A}}||_F\;=\;\sqrt{\tilde{\sigma}_{R+1}^2 + \cdots + \tilde{\sigma}_N^2}.
$$
\label{lem:errorlem}
\end{lemma}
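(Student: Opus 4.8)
The plan is to exploit the two properties already established for the TTr1 decomposition in this section: (i) the $N$ rank-1 outer products $\ten{T}_i \triangleq 1 {_{\times_1}} u_{1i} {_{\times_2}} \cdots {_{\times_{d-1}}} u_{d-1\,i} {_{\times_d}} v_i$ are pairwise orthogonal, $\langle \ten{T}_i,\ten{T}_j\rangle = 0$ for $i\neq j$, and (ii) each has unit Frobenius norm, $||\ten{T}_i||_F = 1$. Granting these, the lemma is just the tensor Pythagorean theorem.

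Concretely, I would first write the residual explicitly. Since $\ten{A} = \sum_{i=1}^N \tilde{\sigma}_i \ten{T}_i$ and $\tilde{\ten{A}} = \sum_{i=1}^R \tilde{\sigma}_i \ten{T}_i$, subtraction gives $\ten{A} - \tilde{\ten{A}} = \sum_{i=R+1}^N \tilde{\sigma}_i \ten{T}_i$. Then I would compute the squared Frobenius norm by expanding the inner product bilinearly:
\[
||\ten{A}-\tilde{\ten{A}}||_F^2 \;=\; \Big\langle \sum_{i=R+1}^N \tilde{\sigma}_i \ten{T}_i,\ \sum_{j=R+1}^N \tilde{\sigma}_j \ten{T}_j \Big\rangle \;=\; \sum_{i=R+1}^N \sum_{j=R+1}^N \tilde{\sigma}_i \tilde{\sigma}_j \langle \ten{T}_i,\ten{T}_j\rangle .
\]
By orthogonality all off-diagonal terms ($i\neq j$) vanish, and by the unit-norm property the diagonal terms contribute $\langle \ten{T}_i,\ten{T}_i\rangle = ||\ten{T}_i||_F^2 = 1$, leaving $\sum_{i=R+1}^N \tilde{\sigma}_i^2$. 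Taking square roots yields the claim.

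There is really no serious obstacle here, since the heavy lifting (orthogonality of the outer factors, via the tree structure and the Kronecker-product argument of the previous subsection) has already been done; the only thing to be careful about is that the ordering/relabeling assumed just before the lemma, $\tilde{\sigma}_1 \geq \cdots \geq \tilde{\sigma}_N$, is harmless—it is a permutation of the terms in \eqref{eq:ttr1}, which does not affect orthogonality, unit norms, or the identity $\ten{A} = \sum_i \tilde{\sigma}_i \ten{T}_i$. If desired, one can also remark that this is the exact tensor analogue of the Eckart–Young-type truncation bound for the matrix SVD, with the caveat (noted earlier in the paper) that the truncated $\tilde{\ten{A}}$ need not be a best $R$-term approximation.
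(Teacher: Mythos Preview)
Your proof is correct and follows exactly the same idea as the paper's own argument: write the residual as the tail of the orthogonal rank-1 sum and invoke orthogonality plus unit norms to reduce the squared Frobenius norm to $\sum_{i=R+1}^N \tilde{\sigma}_i^2$. The paper compresses this into a single displayed line, whereas you spell out the bilinear expansion explicitly, but the substance is identical.
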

\begin{proof}
Using the fact that $||1 {_{\times_1}} u_{1i} {_{\times_2}} u_{2i} {_{\times_3}} \cdots {_{\times_{d-1}}} u_{d-1 i} {_{\times_d}} v_{i}||_F = 1$ we can write
$$
||\ten{A}-\tilde{\ten{A}}||_F = ||\sum_{i=R+1}^n \tilde{\sigma}_i {_{\times_1}} u_{1i} {_{\times_2}} u_{2i} {_{\times_3}} \cdots {_{\times_{d-1}}} u_{d-1 i} {_{\times_d}} v_{i} ||_F= \sqrt{\tilde{\sigma}_{R+1}^2 + \cdots + \tilde{\sigma}_N^2}.
$$ 
\quad \end{proof}

Lemma \ref{lem:errorlem} can also be used to determine the lowest number of terms $R$ with a guaranteed accuracy. Indeed, once a tolerance $\epsilon$ is chosen such that it is required that
$$
||\ten{A}-\tilde{\ten{A}}||_F\;\leq \; \epsilon,
$$
the minimal number of terms $R$ in the TTr1 decomposition of $\tilde{\ten{A}}$ is easily determined by the requirement that
$$
\sqrt{\tilde{\sigma}_{R+1}^2 + \cdots + \tilde{\sigma}_N^2} \leq \epsilon \textrm{ such that } \sqrt{\tilde{\sigma}_{R}^2 + \cdots + \tilde{\sigma}_N^2} > \epsilon.
$$
It is tempting to choose $R$ such that $\tilde{\sigma}_R > \epsilon > \tilde{\sigma}_{R+1}$. However, when the approxi-rank gap, defined as $\tilde{\sigma}_R/\tilde{\sigma}_{R+1}$ \cite[p.~920]{Zeng2005}, is not large enough then there is a possibility that $\sqrt{\tilde{\sigma}_{R+1}^2 + \cdots + \tilde{\sigma}_N^2} \geq \epsilon$ due to the contributions of the smaller singular values. A large approxi-rank gap implies that the number of terms in the approximation $\tilde{\ten{A}}$ is relatively insensitive to the given tolerance $\epsilon$. In Example 6 a tensor is presented for which this is not the case.

\subsection{Reducing the number of SVDs}
Suppose that an approximation $\tilde{\ten{A}}$ of $\ten{A}$ is desired such that $||\ten{A}-\tilde{\ten{A}}||_F\;\leq \; \epsilon$. Computing the full TTr1 decomposition and applying Lemma \ref{lem:errorlem} solves this problem. It is, however, possible to reduce the total number of required SVDs by taking into account that the final singular values $\tilde{\sigma}_i$'s are the product of the singular values along each branch of the TTr1-tree. An important observation is that all singular values $\sigma_{ij\cdots m}$ at levels 2 up to $d-1$ satisfy $\sigma_{ij\cdots m} \leq 1$. This is easily seen from the fact that they are computed from a reshaped unit vector $v_{ij\cdots n}$ at their parent node. Indeed, since $||v_{ij\cdots n}||_2=1$ it follows that $||\bar{v}_{ij\cdots n}||_F=1$. This allows us to make an educated guess at the impact of the singular values at level $l$ on the final rank-1 terms. Suppose we have a singular value $\sigma_k$ at level $l$, preceded by a product $\sigma_{k0}$ of parent singular values. An upper bound on the size of the final $\tilde{\sigma}$'s that are descendants from $\sigma_k$ can be derived by assuming that $\sigma_k$ is unchanged throughout each branch. Since one node at level $l$ results in $\prod_{i=l}^{d-2} r_i$ rank-1 terms, this then implies that there are $\prod_{i=l}^{d-2} r_i$ rank-1 terms with $\tilde{\sigma} = \sigma_{k0}\sigma_k^{d-l-1}$, so
$$
\tilde{\sigma}_{1}^2 + \cdots + \tilde{\sigma}_N^2 \leq \tilde{\sigma}_{1}^2 + \cdots + \tilde{\sigma}_r^2 + \prod_{i=l}^{d-2} r_i (\sigma_{k0} \sigma_k^{d-l-1})^2.
$$
If now 
\begin{equation}
e_k^2 \triangleq \prod_{i=l}^{d-2} r_i (\sigma_{k0} \sigma_k^{d-l-1})^2 \leq \epsilon^2
\label{eq:reducecond}
\end{equation}
is satisfied then removing $\sigma_k$ at level $l$ produces an approximation $\tilde{\ten{A}}$ that is guaranteed to satisfy the approximation error bound. Removing $\sigma_k$ at level $l$ implies that not a full but a reduced TTr1 decomposition is computed. Indeed, the total number of computed rank-1 terms is effectively lowered by $\prod_{i=l}^{d-2} r_i$ terms, decreasing the total number of required SVDs in the TTr1SVD algorithm. This condition on $\sigma_k$ is easily extended to $m$ singular values at level $l$ as
\begin{equation}
\sum_{j=1}^m e_j^2 \leq \epsilon^2,
\label{eq:prunecondition}
\end{equation}
where we compute an $e_j$ term for each of the $m$ singular values at level $l$. Checking whether \eqref{eq:prunecondition} holds for $m$ $\sigma$'s at level $l$ can be easily implemented in Algorithm \ref{alg:ttr1svd}. As shown in Section \ref{sec:examples}, a rather gradual decrease of $\sigma_k$ is seen in practice as the level increases. This implies that it might still be possible to find a $\tilde{\ten{A}}$ of lower rank that satisfies the approximation error bound from the rank-1 terms of a reduced TTr1 decomposition. Lemma \ref{lem:errorlem} can also be used to find the desired $\tilde{\ten{A}}$ in this case.

\subsection{Orthogonal complement tensors}
\label{subsec:Xperp}
We can consider the vectorization of $\ten{A}$ as a vector living in an $(n_1\cdots n_d)$-dimensional vector space. Naturally, there must be a $(n_1\cdots n_d-1)$-dimensional vector space $\spn(\ten{A})^\perp$ of tensors that are orthogonal to $\ten{A}$. Note that each basis vector of $\spn(\ten{A})^\perp$ is required to be the vectorization of an outer product of vectors. The TTr1 decomposition allows us to easily find an orthogonal basis for $\spn(\ten{A})^\perp$. We will illustrate how this comes about using the tensor $\ten{A}$ from Figure~\ref{fig:A} and notions in~\eqref{ex:Xttr1}. Recall from Section~\ref{sec:ttr1} that the first step in the TTr1SVD algorithm was the economical SVD of the $3\times 8$ matrix $\bar{\ten{A}}=USV^T$. Each of the $v_i$ vectors was then reshaped into a $4\times 2$ matrix $\bar{v}_i$. Now consider a full SVD of each of these $\bar{v}_i$ matrices
\begin{equation}
\bar{v}_i \;=\; \begin{pmatrix}
u_{i1} & u_{i2} & u_{i3} & u_{i4}
\end{pmatrix} \, 
\begin{pmatrix}
\sigma_{i1} & 0\\
0 & \sigma_{i2}\\
0 & 0 \\
0 & 0
\end{pmatrix}
\, \begin{pmatrix}
v_{i1}^T\\
v_{i2}^T \\
\end{pmatrix},
\label{eq:fullsvd}
\end{equation}
which is a sum of 8 orthogonal rank-1 outer products with only 2 nonzero $\sigma$'s. There are hence 6 additional outer product terms $u_i \circ u_{ij} \circ v_{ij}$ with a zero singular value, orthogonal to the outer product terms of the economical TTr1 decomposition \eqref{ex:Xttr1}. It is easily seen that the rank-1 terms obtained from the zero entries of the $S$ matrix in \eqref{eq:fullsvd} are orthogonal to $\ten{A}$ and are therefore basis vectors of $\spn(\ten{A})^\perp$. Table \ref{tab:u1terms} lists all 8 orthogonal rank-1 outer product terms that are obtained for the $u_1$ branch in the TTr1-tree. Each rank-1 term can be read off from Table \ref{tab:u1terms} by starting from the top row and going down along a particular branch of the TTr1-tree. For example, the fourth rank-1 term is given by $\tilde{\sigma}_2\, u_1 \circ u_{12} \circ v_{12}$ and the seventh rank-1 term by $0\, u_1 \circ u_{14} \circ v_{11}$. We call such a table that exhibits the TTr1-tree structure and allows us to reconstruct all rank-1 terms an outer product column table. The extra orthogonal terms for the $u_2,u_3$ branches are completely analogous to the $u_1$ branch. Note that the economical TTr1 decomposition described in Section \ref{sec:ttr1} only computes the $\tilde{\sigma}_1 \, u_1 \circ u_{11} \circ v_{11}$ and $\tilde{\sigma}_2 \, u_1 \circ u_{12} \circ v_{12}$ terms.
\begin{table}[ht]
\begin{center}
\caption{All 8 orthogonal rank-1 outer products in the $u_1$ branch of the TTr1-tree.}
\label{tab:u1terms}

		\begin{tabular}{|c|c|c|c|c|c|c|c|}
		\hline

		\multicolumn{8}{|c|}{$u_1$}\\
\hline
		\multicolumn{2}{|c|}{$u_{11}$} & \multicolumn{2}{|c|}{$u_{12}$} & \multicolumn{2}{|c|}{$u_{13}$} & \multicolumn{2}{|c|}{$u_{14}$} \\
		\hline
		{$v_{11}$} & { $v_{12}$} & { $v_{11}$} & { $v_{12}$} & {$v_{11}$} & {  $v_{12}$} & {$v_{11}$} & {$v_{12}$}\\
	\hline
		{$\tilde{\sigma}_{1}$} & {$0$} & {$0$} & {$\tilde{\sigma}_{2}$} & {$0$} & {$0$} & {$0$} & {$0$}\\
	\hline
		\end{tabular}
	\end{center}
\end{table}
%\begin{table}[ht]
%	\begin{center}
%		\begin{tabular}{|c|c|c|c|c|c|c|c|}
%		\hline

%		\multicolumn{8}{|c|}{$u_1$}\\
%\hline
%		\multicolumn{2}{|c|}{$u_{11}$} & \multicolumn{2}{|c|}{$u_{12}$} & %\multicolumn{2}{|c|}{$u_{21}$} & \multicolumn{2}{|c|}{$u_{22}$} \\
%		\hline
%		{\cellcolor[rgb]{1,1,0} $v_{11}$} & {\cellcolor[rgb]{.8,1,.4} $v_{12}$} & {\cellcolor[rgb]{.8,1,.4} $v_{11}$} & {\cellcolor[rgb]{1,1,0} $v_{12}$} & {\cellcolor[rgb]{.8,1,.4} $v_{21}$} & {\cellcolor[rgb]{.8,1,.4} $v_{22}$} & {\cellcolor[rgb]{.8,1,.4} $v_{21}$} & {\cellcolor[rgb]{.8,1,.4} $v_{22}$}\\
%	\hline
%		{\cellcolor[rgb]{1,1,0} $\tilde{\sigma}_{1}$} & {\cellcolor[rgb]{.8,1,.4} $0$} & {\cellcolor[rgb]{.8,1,.4} $0$} & {\cellcolor[rgb]{1,1,0} $\tilde{\sigma}_{2}$} & {\cellcolor[rgb]{.8,1,.4} $0$} & {\cellcolor[rgb]{.8,1,.4} $0$} & {\cellcolor[rgb]{.8,1,.4} $0$} & {\cellcolor[rgb]{.8,1,.4} $0$}\\
%	\hline
%		\end{tabular}
%	\end{center}
%\caption{All 8 orthogonal rank-1 outer products in the $u_1$ branch of the TTr1-tree.}
%\label{tab:u1terms}
%\end{table}

The full TTr1 decomposition therefore consists of $8 \times 3 = 24$ orthogonal terms and can be written in vectorized form as
$$
\textrm{vec}(\ten{A}) \;=\; \begin{pmatrix}\textrm{vec}(\ten{T}_1) & \cdots & \textrm{vec}(\ten{T}_6) & \textrm{vec}(\ten{T}_7) & \cdots \textrm{vec}(\ten{T}_{24}) \end{pmatrix} \begin{pmatrix}
\tilde{\sigma}_1 \\
\vdots\\
\tilde{\sigma}_6 \\
0\\
\vdots\\
0
\end{pmatrix}
$$
where $\textrm{vec}(\ten{T}_1),\ldots ,\textrm{vec}(\ten{T}_6)$ are the orthogonal terms computed in Section \ref{sec:ttr1} and $\textrm{vec}(\ten{T}_7),\ldots ,\textrm{vec}(\ten{T}_{24})$ are the orthogonal terms that partly span $\spn(\ten{A})^\perp$. Note that we have only found 18 basis vectors for $\spn(\ten{A})^\perp$. The remaining 5 basis vectors are to be found as the following linear combinations of $\textrm{vec}(\ten{T}_1),\ldots ,\textrm{vec}(\ten{T}_{6})$
$$
\begin{pmatrix}\textrm{vec}(\ten{T}_1) & \cdots & \textrm{vec}(\ten{T}_6)\end{pmatrix} S^\perp,
$$
where $S^\perp$ is the $6\times 5$ matrix orthogonal to $\begin{pmatrix} \tilde{\sigma}_1 & \cdots & \tilde{\sigma}_6 \end{pmatrix}$. The property that for every tensor $\ten{B} \in \spn(\ten{A})^\perp$ we have that $\langle \ten{A},\ten{B} \rangle = 0$, allows us to interpret $\spn(\ten{A})^\perp$ as the orthogonal complement of $\textrm{vec}(\ten{A})$.

\subsection{Constructive proof maximal CP-rank of $2 \times 2 \times 2$ tensor}
As an application of the outer product column table, we show how it leads to an elegant proof of the maximal CP-rank of a real $2\times 2\times 2$ tensor over $\mathbb{R}$. It is known that the maximum rank of a real $2\times 2\times 2$ tensor over $\mathbb{R}$ is 3 (i.e., any such tensor can be expressed as the sum of at most 3 real outer products~\cite{tensorreview}), for which rather complicated proofs were given in~\cite{Kruskal1989,tenberge1991}. Incidentally, we show that the TTr1 decomposition allows us to formulate a remarkably simpler proof. As in Section \ref{subsec:Xperp}, we first consider all orthogonal outer products that span $\mathbb{R}^{2\times 2\times 2}$ in the outer product Table~\ref{tab:column}.
\begin{table}[ht]
	\begin{center}
\caption{Outer product table for a general $2\times 2\times 2$ tensor.}
	\label{tab:column}	
		\begin{tabular}{|c|c|c|c|c|c|c|c|}
		\hline
		\multicolumn{4}{|c|}{$u_1$} & \multicolumn{4}{|c|}{$u_2$} \\
		 \hline
		\multicolumn{2}{|c|}{$u_{11}$} & \multicolumn{2}{|c|}{$u_{12}$} & \multicolumn{2}{|c|}{$u_{21}$} & \multicolumn{2}{|c|}{$u_{22}$} \\
	 \hline
		{ $v_{11}$} & {$v_{12}$} & {$v_{11}$} & {$v_{12}$} & {$v_{21}$} & {$v_{22}$} & {$v_{21}$} & {$v_{22}$}\\
	 \hline
		{$\tilde{\sigma}_1$} & {$0$ } & {$0$ } & {$\tilde{\sigma}_2$ } & {$\tilde{\sigma}_3$} & {$0$} & {$0$} & {$\tilde{\sigma}_4$}\\ \hline	
%		{ $v_{11}$} & {\cellcolor[rgb]{.8,1,.4} $v_{12}$} & {\cellcolor[rgb]{.8,1,.4} $v_{11}$} & {\cellcolor[rgb]{1,1,0} $v_{12}$} & {\cellcolor[rgb]{1,1,0} $v_{21}$} & {\cellcolor[rgb]{.8,1,.4} $v_{22}$} & {\cellcolor[rgb]{.8,1,.4} $v_{21}$} & {\cellcolor[rgb]{1,1,0} $v_{22}$}\\
%	 \hline
%		{\cellcolor[rgb]{1,1,0} $\tilde{\sigma}_1$} & {\cellcolor[rgb]{.8,1,.4} $0$ } & {\cellcolor[rgb]{.8,1,.4} $0$ } & {\cellcolor[rgb]{1,1,0} $\tilde{\sigma}_2$ } & {\cellcolor[rgb]{1,1,0} $\tilde{\sigma}_3$} & {\cellcolor[rgb]{.8,1,.4} $0$} & {\cellcolor[rgb]{.8,1,.4} $0$} & {\cellcolor[rgb]{1,1,0} $\tilde{\sigma}_4$}\\ \hline
		\end{tabular}
	\end{center}

\end{table}

The columns in Table~\ref{tab:column} with nonzero singular values are the ``active'' columns in the TTr1 decomposition of a random real tensor $\ten{A} \in\mathbb{R}^{2\times 2\times 2}$. The ``inactive'' (orthogonal) columns carry zero weights but are crucial for proving the maximum rank-3 property of $\ten{A}$. 

We first enumerate two important yet straightforward properties for the columns in Table~\ref{tab:column} ignoring the bottom row for the time being. First, scaling a column can be regarded as multiplying a scalar onto the whole outer product or absorbing it into any one of the mode vectors. Taking the first column and a scalar $\alpha\in\mathbb{R}$ for instance, this means that%
\begin{align*}
\alpha (u_1\circ u_{11} \circ v_{11}) = (\alpha u_1) \circ u_{11} \circ v_{11} = u_1 \circ (\alpha u_{11}) \circ v_{11} = u_1 \circ  u_{11} \circ (\alpha v_{11}).
\end{align*}
In other words, the scalar is ``mobile'' across the various modes. The second property is that any two columns differing in only one mode can be added to form a new rank-1 outer product~\cite{Kolda2001}. We list two examples showing the rank-1 outer products resulting from the linear combinations of columns 1 and 3, and columns 3 and 4, respectively,%
\begin{align*}
\alpha (u_1\circ u_{11} \circ v_{11}) + \beta  (u_1\circ u_{12} \circ v_{11})&= u_1 \circ (\alpha u_{11} + \beta u_{12}) \circ v_{11},\\
\alpha (u_1\circ u_{12} \circ v_{11}) + \beta  (u_1\circ u_{12} \circ v_{12})&= u_1 \circ  u_{12} \circ (\alpha v_{11}+\beta v_{12}).%
\end{align*}

Now to prove the maximum rank-3 property of $\ten{A}$ then, is to show that the four active columns of the outer product column table~\ref{tab:column} can always be ``merged'' into three. To begin with, it is readily seen that if we add any nonzero multiple of column 3 to column 1, and then subtract the same multiple of column 3 from column 4, the overall tensor by summing all columns in Table~\ref{tab:column} remains unchanged. Our final goal is to merge columns 1 and 5 into one outer product by making two of their modes the same (up to a scalar factor). This is done by appropriately adding column 3 to column 1 such that the second mode vectors of columns 1 and 5 align, while adding column 6 to column 5 such that the third mode vectors of columns 1 and 5 align. Of course, subtractions of column 3 from column 4, and column 6 from column 8, respectively, are necessary to offset the addition. This intermediate step is summarized in Table~\ref{tab:columnProof} wherein the four intermediate columns are now shown individually with the $\tilde{\sigma}_i$'s absorbed into the mode vectors.

\begin{table}[ht]
\centering
\begin{center}
\caption{The four intermediate outer products that can be merged into three outer products.}
\label{tab:columnProof}
\begin{tabular}{@{}cccc@{}}
{$u_1$} &  {$u_1$} & {$u_2$} & {$u_2$}\\ \midrule
{$\tilde{\sigma}_1 u_{11}$} &  {} & {} & {$\tilde{\sigma}_4 u_{22}$}\\
{$+\alpha u_{12}$} & {$u_{11}$} & {$u_{21}$} & {$-\gamma u_{21}$}\\
{$(=\beta u_{21})$} & {} & {} & {}\\\midrule
{} & {$-\alpha v_{11} $} & {$\tilde{\sigma}_3 v_{21}$} & {}\\
{$v_{11}$} & {$+ \tilde{\sigma}_2 v_{12}$} & {$+\gamma v_{22}$} & {$v_{22}$}\\		
{} & {} & {$(=\delta v_{11})$} & {}\\
		\end{tabular}
	\end{center}
\end{table}

The two linear equations that need to be solved in this process are%
\begin{align*}
\left[ {\begin{array}{*{20}c}
   { - u_{12} } & {u_{21} }  \\
 \end{array} } \right]\left[ {\begin{array}{*{20}c}
   \alpha   \\
   \beta   \\
 \end{array} } \right] = \tilde{\sigma}_1 u_{11}~~\mbox{and}~~\left[ {\begin{array}{*{20}c}
   { - v_{22} } & {v_{11} }  \\
 \end{array} } \right]\left[ {\begin{array}{*{20}c}
   \gamma   \\
   \delta   \\
 \end{array} } \right] = \tilde{\sigma}_3 v_{21}.%
\end{align*}
It is not hard to see that columns 1 and 3 of Table \ref{tab:columnProof} can now be merged into one outer product
$$
 (\beta u_1 + \delta u_2) \circ u_{21} \circ v_{11}
$$
due to two of their mode vectors now being parallel. Hence an overall rank-3 representation for the original tensor $\ten{A}$ is obtained from its TTr1 decomposition.

Obviously, this rank-3 representation is not unique since alternatively we can first align the third mode of columns 1 and 5, followed by their second mode. Furthermore, instead of columns 1 and 3, we can also merge columns 2 and 4 etc.. Details are omitted as they are all based on the same idea of merging columns. Another big advantage of our rank-3 construction is that the relative numerical error $||\ten{A}-\tilde{\ten{A}}||_F / ||\ten{A}||_F < 10^{-15}$, whereas the CP rank-3 decomposition has a median error of $\approx 10^{-6}$ over a 100 trials of arbitrary $2 \times 2 \times 2$ tensors.

\subsection{Perturbations of singular values}
When an $m\times n$ matrix $A$ is additively perturbed by a matrix $E$ to form $\hat{A}=A+E$, then Weyl's Theorem \cite{Stewart90perturbationtheory} bounds the absolute perturbations of the corresponding singular values by
$$
|\sigma_{i} - \hat{\sigma}_i | \leq ||E||_2,
$$
where the $\hat{\sigma}_i$'s are the singular values of $\hat{A}$. It is possible to extend Weyl's Theorem to the TTr1 decomposition of the perturbed tensor $\hat{\ten{A}}=\ten{A}+\ten{E}$. Suppose we want to determine an upper bound for the perturbation of one of the singular values $\tilde{\sigma}$. We first introduce the simpler notation
$$
\tilde{\sigma}=\sigma_1\,\sigma_2\,\cdots\,\sigma_{d-2},
$$
where $\sigma_k (k=1,\ldots,d-2)$ denotes the singular value at level $k$ in the branch of the TTr1-tree corresponding with $\tilde{\sigma}$. Applying Weyl's Theorem to the first factor gives
$$
|\sigma_1-\hat{\sigma}_1| \leq ||\bar{\ten{E}}||_2,
$$
which we can rewrite into
\begin{equation}
|\hat{\sigma}_1| \leq |\sigma_1|+||\bar{\ten{E}}||_2.
\label{eq:firstsigma}
\end{equation}
Each of the remaining factors $\sigma_2,\ldots,\sigma_{d-2}$ are the singular values of a reshaped right singular vector $v_1,\ldots,v_{d-3}$. Again, application of Weyl's Theorem allows us to write
\begin{align}
\nonumber |\hat{\sigma}_k| & \leq |\sigma_k|+||\Delta\bar{v}_{k-1}||_2 \quad (k=2,\ldots,d-2),\\
  & \leq |\sigma_k|+||\Delta\bar{v}_{k-1}||_F = |\sigma_k|+||\Delta{v}_{k-1}||_2 \quad (k=2,\ldots,d-2).
\label{eq:remainingsigma}
\end{align}
An upper bound for the $||\Delta\bar{v}_{k-1}||_2$ term is difficult to derive. Fortunately, it is possible to replace the $||\Delta\bar{v}_{k-1}||_2$ term by $||\Delta\bar{v}_{k-1}||_F=||\Delta{v}_{k-1}||_2$, for which first-order approximations exist \cite{Liu2008}. Multiplying \eqref{eq:firstsigma} with \eqref{eq:remainingsigma} over all $k$ we obtain
$$
|\hat{\sigma}_1| \cdots |\hat{\sigma}_{d-2}| \leq (|\sigma_1|+||\bar{\ten{E}}||_2) \cdots (|\sigma_k|+||\Delta{v}_{k-1}||_2),
$$
which can be simplified by ignoring higher order terms to
\begin{equation}
|\hat{\sigma}_1| \cdots |\hat{\sigma}_{d-2}| \leq |\sigma_1| \cdots |\sigma_{d-2}| +|\sigma_2| \cdots |\sigma_{d-2}|\,||\bar{\ten{E}}||_2 + \sum_{k=2}^{d-2} (\prod_{i\neq k} |\sigma_i|) ||\Delta{v}_{k-1}||_2.
\label{eq:perturbsigma}
\end{equation}
The maximal value for $\sigma_2, \ldots, \sigma_{d-2}$ is 1 and hence \eqref{eq:perturbsigma} can be written as
$$
|\hat{\sigma}_1| \cdots |\hat{\sigma}_{d-2}| \leq |\sigma_1| \cdots |\sigma_{d-2}| + ||\bar{\ten{E}}||_2 + \sigma_1 (\sum_{k=1}^{d-3} ||\Delta{v}_{k}||_2).
$$
Hence we arrive at the expression
\begin{equation}
|\tilde{\sigma}-\hat{\tilde{\sigma}}| \leq  ||\bar{\ten{E}}||_2 + \sigma_1 (\sum_{k=1}^{d-3} ||\Delta{v}_{k}||_2),
\label{eq:weylTTr1}
\end{equation}
which generalizes Weyl's Theorem to the TTr1 decomposition by the addition of a correction term $\sigma_1 (\sum_{k=1}^{d-3} ||\Delta{v}_{k}||_2)$. This correction term depends on the largest singular value of the first level and the perturbations on the right singular vectors for levels 1 up to $d-3$.

\subsection{Conversion to the Tucker decomposition}
It is possible to convert the sum of orthogonal rank-1 terms obtained from a (truncated) TTr1 decomposition into the Tucker decomposition
$$
\ten{A} \;=\;\ten{S} {_{\times_1}} U_1  {_{\times_2}} U_2 \cdots {_{\times_d}} U_d
$$
where $\ten{S} \in \mathbb{R}^{r_1 \times r_2 \times \cdots \times r_d}$ is called a core tensor and $U_k \in \mathbb{R}^{n_k \times r_k}$ are orthogonal factor matrices. In this way it becomes relatively easy to compute an approximation of a tensor with a known approximation error in the Tucker format with orthogonal factor matrices. This conversion is easily achieved using simple matrix operations. To avoid notational clumsiness, we illustrate the conversion from the TTr1 representation into the Tucker form through the specific TTr1 decomposition in~\eqref{ex:Xttr1}. Suppose only alternate terms in the equation are significant and we therefore only keep the $3$ terms associated with $\sigma_1 \sigma_{11}$, $\sigma_2 \sigma_{21}$ and $\sigma_3\sigma_{31}$. Then, the mode vectors of these outer products are collected and subjected to economic QR factorization. We note that $U_1=[u_1~u_2~u_3]\in\mathbb{R}^{3\times 3}$ is already orthogonal and does not need to go through a QR factorization, while%
\begin{align*}
  \left[\!\! {\begin{array}{*{20}c}
   {u_{11} } & {u_{21} } & {u_{31} }  \\
 \end{array} } \!\!\right] &= \underbrace{\left[\!\! {\begin{array}{*{20}c}
   {q_{21} } & {q_{22} } & {q_{23} }  \\
 \end{array} } \!\!\right]}_{U_2}\left[\!\! {\begin{array}{*{20}c}
   {1} & {\alpha _{12} } & {\alpha _{13} }  \\
   0 & {\alpha _{22} } & {\alpha _{23} }  \\
   0 & 0 & {\alpha _{33} }  \\
 \end{array} } \!\!\right] \nonumber,\\
  \left[\!\! {\begin{array}{*{20}c}
   {v_{11} } & {v_{21} } & {v_{31} }  \\
 \end{array} } \!\!\right] &= \underbrace{\left[\!\! {\begin{array}{*{20}c}
   {q_{31} } & {q_{32} }  \\
 \end{array} } \!\!\right]}_{U_3}\left[\!\! {\begin{array}{*{20}c}
   {1} & {\beta _{12} } & {\beta _{13} }  \\
   0 & {\beta _{22} } & {\beta _{23} }  \\
 \end{array} } \!\!\right].%
%\label{eqn:qr}
\end{align*}
Consequently, the truncated TTr1SVD of~\eqref{ex:Xttr1} reads%
\begin{align}
\ten{A}\approx &\sigma_1 \sigma_{11} (u_1\circ u_{11}\circ v_{11}) + \sigma_2 \sigma_{21} (u_2\circ u_{21}\circ v_{21})+ \sigma_3 \sigma_{31} (u_3\circ u_{31}\circ v_{31})\nonumber\\
=& \sigma_1 \sigma_{11} (u_1\circ q_{21}\circ q_{31})+\sigma_2 \sigma_{21} (u_2\circ (\alpha_{12} q_{21}+\alpha_{22}q_{22})\circ (\beta_{12}q_{31}+\beta_{22}q_{32}))+\nonumber\\
& \sigma_3 \sigma_{31} (u_3\circ (\alpha_{13} q_{21}+\alpha_{23} q_{22}+\alpha_{33} q_{23}) \circ (\beta_{13} q_{31}+\beta_{23} q_{32}))\nonumber\\
=& \ten{S} {_{\times_1}} U_1  {_{\times_2}} U_2 {_{\times_3}} U_3,
\label{eqn:ttr1svd2tucker}
\end{align}
where the core tensor $\ten{S}\in\mathbb{R}^{3\times 3\times 2}$ is filled with coefficients found through expanding the outer products and collecting terms in~(\ref{eqn:ttr1svd2tucker}). Observe that the dimensions of the core tensor $\ten{S}$ are completely determined by the ranks of the orthogonal factor matrices. From practical examples we observe that the Tucker core obtained in this way is more sparse compared to the Tucker core computed from the ALS algorithm~\cite{TTB_Sparse,TTB_Software}. If we take for example a random tensor in $\in \mathbb{R}^{4 \times 3 \times 15}$, then its TTr1 decomposition consists of 12 terms. The ranks of the orthogonal factor matrices $U_1,U_2,U_3$ are then $4,3,12$, respectively. Consequently, we have that $\ten{S} \in \mathbb{R}^{4 \times 3 \times 12}$ with $3+3\cdot9=30$ nonzero entries. In contrast, computing the Tucker decomposition using the ALS method results in a maximally dense core tensor of 144 nonzero entries.

%\subsection{Guaranteed accuracy of low rank approximation}
%Suppose we want to compute a low rank approximation $\tilde{X}$

%\subsection{Connection to Variational Analysis}
%I'll check connection to LIM Lek-Heng's work

\section{Numerical examples}
\label{sec:examples}

% compare decomposing via TT, Tucker and Candecom.
In this section, we demonstrate some of the properties of the TTr1 decomposition and compare it with the ALS-CP and Tucker decompositions by means of numerical examples. All experiments are done in MATLAB on a desktop computer. A Matlab/Octave implementation of the TTr1SVD algorithm can be freely downloaded and modified from \url{https://github.com/kbatseli/TTr1SVD}. The ALS-CP and Tucker decompositions are computed by the ALS optimization tool provided in the MATLAB Tensor Toolbox~\cite{TTB_Sparse,TTB_Software}. All ALS procedures are fed by random initial guesses, therefore their errors are defined as the average error over multiple executions. 

\subsection{Example 1: Singular values and permutation of indices}
\label{ex:ex1}
We start with tensor $\ten{A}$ in Figure~\ref{fig:A}. Since it is discussed in Section~\ref{subsec:upperbound} that TTr1 decomposition depends on the ordering of the indices, we demonstrate the TTr1 decomposition with different permutations of the indices. For a 3-way tensor, the order of indices can be $\{i_1,i_2,i_3\}$, $\{i_1,i_3,i_2\}$, $\{i_2,i_1,i_3\}$, $\{i_2,i_3,i_1\}$, $\{i_3,i_1,i_2\}$ or $\{i_3,i_2,i_1\}$. Since the order of the last 2 indices will not affect the $\sigma$'s in TTr1 decomposition, we only list the $\sigma$'s under the permutations $\{i_1,i_2,i_3\}$, $\{i_2,i_3,i_1\}$ and $\{i_3,i_1,i_2\}$ in Table~\ref{tab:ttr1_X}, in descending order. As a result, although permutations of indices may give different upper bounds on the rank, TTr1 decomposition still outputs the same rank$(\ten{A})=4$ in all permutations. The largest (dominant) singular value (69.6306) differs only slightly with respect to the permutations, which is also the general observation. Note that the singular values are quite similar over all permutations for this example. It can also be seen that some singular values are numerically zero. The same threshold commonly used to determine the numerical rank of a matrix using the SVD can also be used for the TTr1 decomposition.
\begin{table}[ht]
\centering
\caption{$\tilde{\sigma}$'s of TTr1 decomposition for $\ten{A}$.}%\vspace{3pt}
\label{tab:ttr1_X} % is used to refer this table in the text
\begin{tabular}{@{}lrrr@{}}
Order of indices & $\{i_1,i_2,i_3\}$ &  $\{i_2,i_3,i_1\}$ & $\{i_3,i_1,i_2\}$ \\ \midrule
$\tilde{\sigma}_1$ & 69.6306 & 69.6306 & 69.6306 \\
$\tilde{\sigma}_2$ & 6.9190 & 6.9551 & 6.9567 \\
$\tilde{\sigma}_3$ & 1.8036 & 1.6108 & 1.6010 \\
$\tilde{\sigma}_4$ & 0.6729& 0.7781 & 0.7840\\
$\tilde{\sigma}_5$ & 6.7e-15 & 1.9e-15 & 4.3e-15\\
$\tilde{\sigma}_6$ & 1.3e-15 & 1.9e-15 & 1.4e-15\\
$\tilde{\sigma}_7$ & NA& 7.3e-16 & NA\\
$\tilde{\sigma}_8$ & NA& 5.3e-16 & NA\\
\end{tabular}
\end{table}
An interesting consequence of the rank-deficiency of $\ten{A}$ is that we can interpret the rank-1 terms corresponding with the very small numerical singular values as being related to $\spn(\ten{A})^\perp$. For the $\{i_1,i_2,i_3\}, \{i_3,i_1,i_2\}$ permutations there are 2 extra orthogonal complement tensors while for the $\{i_2,i_3,i_1\}$ there are 4 extra orthogonal complement tensors. Figure \ref{fig:SVperms} shows the similar singular value curves for all 6 permutations of a random $3\times 4 \times 2$ tensor, where it can also be seen that there are basically 3 distinct permutations and that the largest singular values only differ slightly over all permutations.

\begin{figure}[th]
\begin{center}
\includegraphics[width=.85\textwidth]{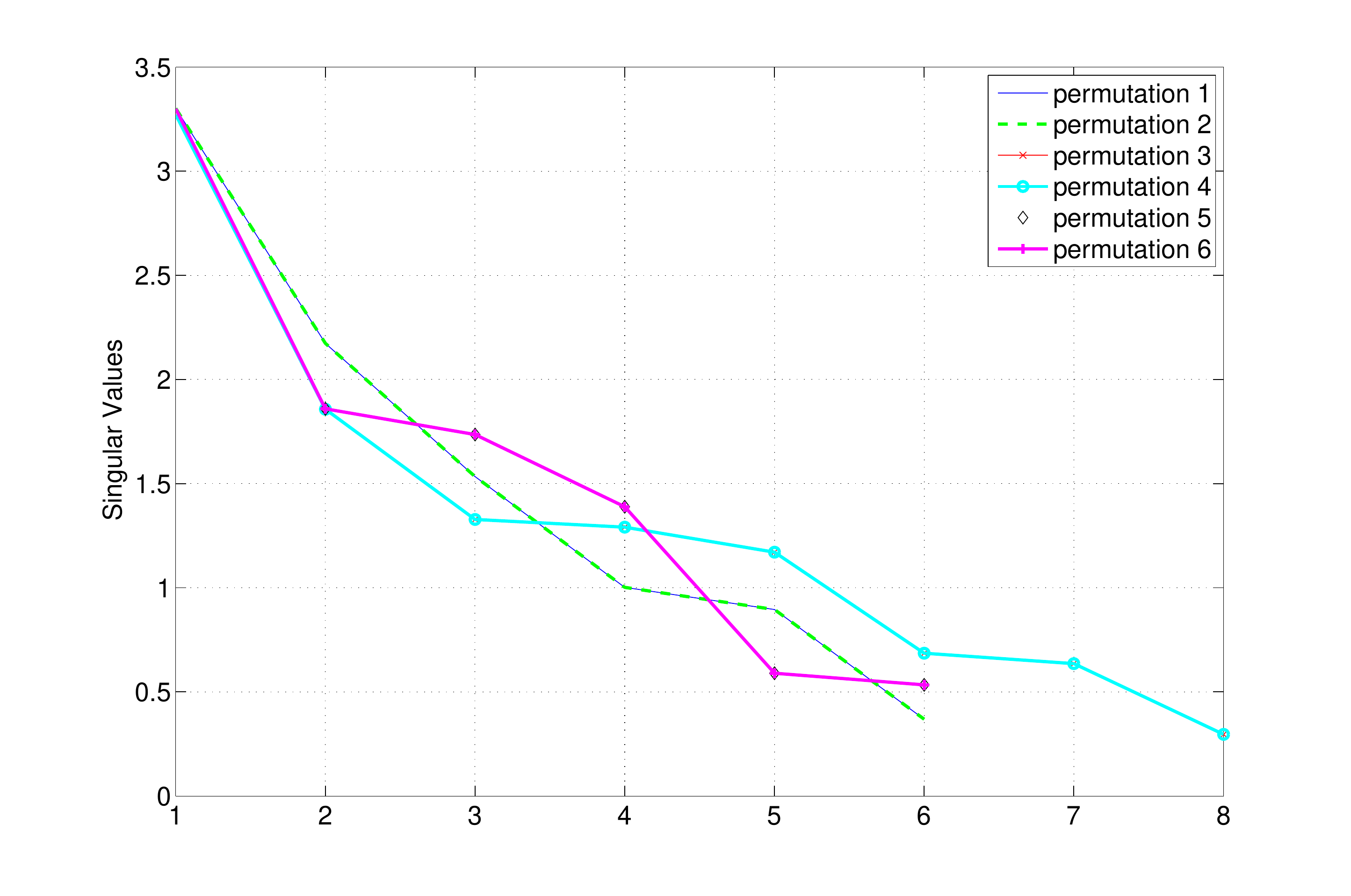}
\caption{Singular value curves for each of the 6 permutations of a random $3\times 4 \times 2$ tensor.}
\label{fig:SVperms}
\end{center}
\end{figure}

\subsection{Example 2: Comparison with ALS-CP and Tucker decomposition}
Next, ALS-CP is applied on $\ten{A}$. To begin with, we compute the best rank-1 approximation of $\ten{A}$. ALS-CP gives the same weight 69.6306 as the TTr1 decomposition, implying that both decompositions result in the same approximation in terms of the Frobenius norm. The errors between $\ten{A}$ and its approximations $\tilde{\ten{A}}$, computed using the ALS-CP and TTr1SVD method, are listed in Table~\ref{tab:CP_X} for increasing rank.
\begin{table}[ht]
\centering
\caption{Errors $||\ten{A}-\tilde{\ten{A}}||_F$ of ALS-CP and TTr1SVD for increasing rank $R$.}%\vspace{3pt}
\label{tab:CP_X} % is used to refer this table in the text
\begin{tabular}{@{}lrrrrr@{}}
Rank & 1 &  2 & 3 & 4 & 5\\ \midrule
TTr1SVD & 7.2 & 1.9 & 0.7 & 6.8e-15 & 1.3e-15\\
ALS-CP & 7.2 & 0.8 & 3.6e-2 & 1.4e-10 & 4.7e-11\\
\end{tabular}
\end{table}

Table~\ref{tab:CP_X} confirms that the $\textrm{rank}_\perp(\ten{A}) = \textrm{rank}(\ten{A}) = 4$. It also indicates that as an optimization approach, ALS-CP itself cannot determine the rank, but only the best rank-$R$ approximation for a specific $R$. Furthermore, it should be noticed that the TTr1 decomposition can always give an $R$-rank-1-term approximation with orthonormal outer products, while ALS-CP cannot assure this property. Finally, a Tucker decomposition with a core size $(2,2,2)$ is applied on $\ten{A}$. The resulting dense core tensor $\ten{S}\in \mathbb{R}^{2\times 2 \times 2}$ is given by
$$
{\ten{S}}_{i_1 i_2 1} =
\begin{pmatrix}
69.6306 & -0.0181\\
-0.0701 & -0.7840
\end{pmatrix} \quad
{\ten{S}}_{i_1 i_2 2} =
\begin{pmatrix}
-0.0113 & -6.9190\\
-1.6108 & -0.7010
\end{pmatrix}.
$$ 
The rank-1 outer factors obtained from the Tucker decomposition are also orthonormal. However, compared to the TTr1 decomposition, the Tucker format needs twice the number of factors than TTr1.

\subsection{Example 3: Rank behavior under largest rank-1 term subtraction}
In this example we investigate the behavior of the singular value curves and the rank when the largest rank-1 term obtained from the TTr1 decomposition is consecutively subtracted. This means that we start with $\ten{A}$ from Figure \ref{fig:A}, compute its largest orthogonal rank-1 term $\ten{T}_1$ from the TTr1 decomposition and subtract it to obtain $\ten{A}-\ten{T}_1$, after which the procedure is repeated. Figure \ref{fig:greedy} shows the singular value curves from the TTr1 decompositions obtained for each of the iterations, where it is easily seen that each curve gets shifted to the left with each iteration. In other words, the largest singular value in the next iteration is the second largest singular value of the previous iteration, etc.. It is also clear that subtracting the largest orthogonal rank-1 term does not necessarily decrease the rank as also described in \cite{Stegeman2010}. Indeed, using a numerical threshold of $\textrm{max}(4,8)\cdot 1.11 \times 10^{-16}\cdot \sigma_1=1.13\times 10^{-13}$ on the singular values obtained in the first iteration will still return a numerical orthogonal rank of 4.

In addition, the rank of the obtained tensors in each iteration was also determined from the CP-decomposition. From Table~\ref{tab:CP_X}, a numerical threshold of $10^{-10}$ was set to the absolute error $||\ten{A}-\tilde{\ten{A}}||_F$ to determine the CP-rank. In Table \ref{tab:CP_rank}, both the CP-rank and the orthogonal rank from the TTr1 decomposition are compared. It can be seen that the rank determined from ALS-CP increases while the orthogonal rank monotonically decreases. In this sense, the orthogonal rank appears to be more robust under largest rank-1 term subtraction.

\begin{figure}[th]
\begin{center}
\includegraphics[width=.85\textwidth]{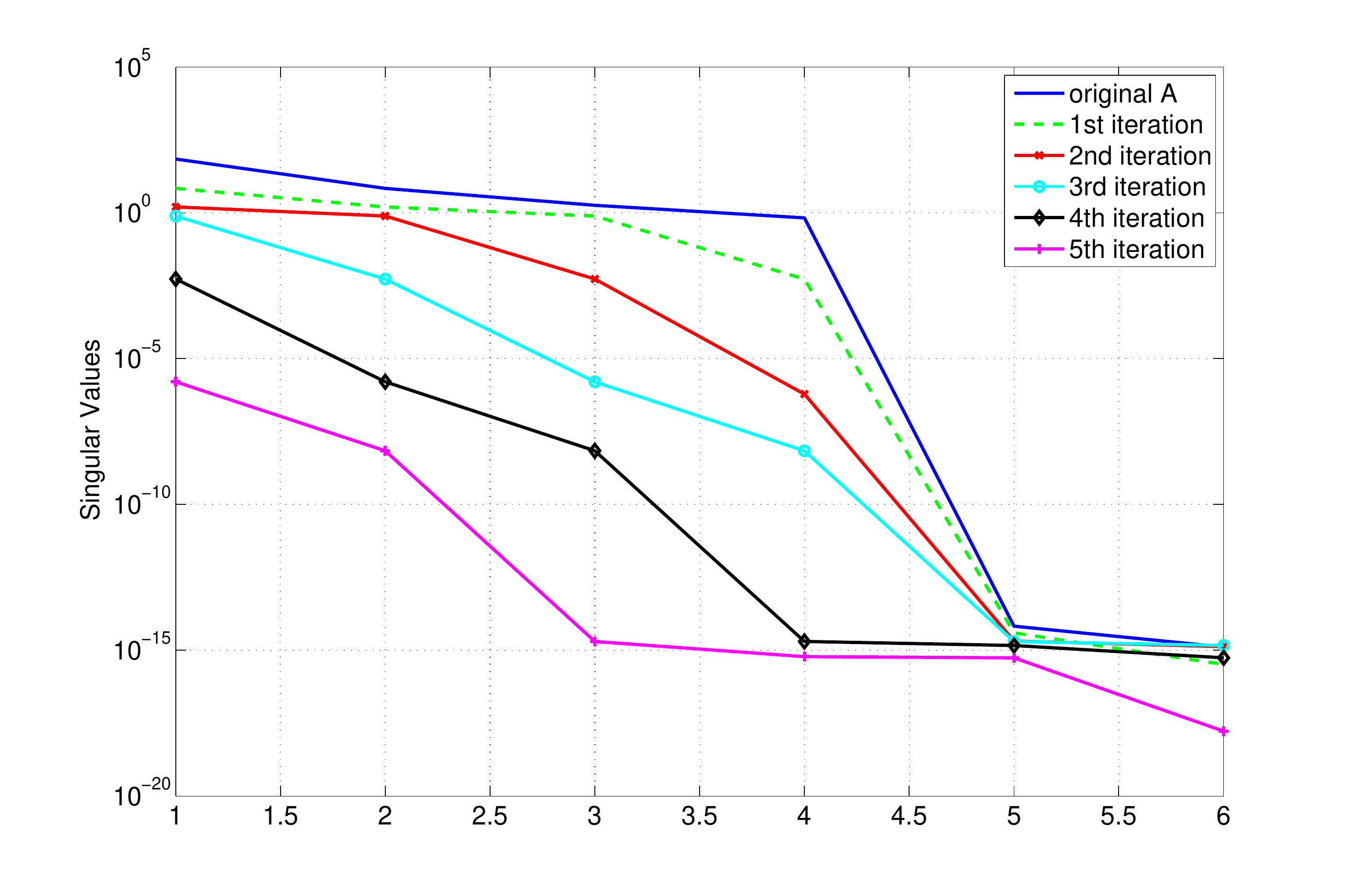}
\caption{Singular value curves for consecutive subtractions of the largest rank-1 term.}
\label{fig:greedy}
\end{center}
\end{figure}

\begin{table}[ht]
\centering
\caption{Ranks determined by ALS-CP and TTr1 decomposition for consecutive subtraction of largest rank-1 term.}%\vspace{3pt}
\label{tab:CP_rank} % is used to refer this table in the text
\begin{tabular}{@{}lrrrrrr@{}}
Iteration & 0 &  1 & 2 & 3 & 4 & 5\\
\hline%\vspace{3pt}
CP-rank & 4 & 4 & 4 & 5 & 3 & 2\\
TTr1 rank & 4 & 4 & 4 & 4 & 3 & 2
\end{tabular}
\end{table}

\subsection{Example 4: Perturbation of the singular values}
In this example we illustrate the robustness of the computed singular values of our running example tensor $\ten{A}$ when it is subjected to additive perturbations. We construct a perturbation tensor $\ten{E} \in \mathbb{R}^{3 \times 4 \times 2}$ where each entry is drawn from a zero mean Gaussian distribution with variance $10^{-6}$. We then compute the following two norms of $\ten{E}$ and $\bar{\ten{E}}$
$$
||\ten{E} ||_F \;=\; 5.48\times 10^{-6} \quad \textrm{ and } \quad ||\bar{\ten{E}} ||_2 \;=\; 4.13\times 10^{-6},
$$
where $\bar{\ten{E}}$ is $\ten{E}$ reshaped into a $3 \times 8$ matrix. Comparing the perturbed singular values $\bar{\tilde{\sigma}}_1,\ldots,\bar{\tilde{\sigma}}_6$ of $\ten{A}+\ten{E}$ with the singular values $\tilde{\sigma}_1,\ldots,\tilde{\sigma}_6$ then shows that
\begin{equation}
\sqrt{ (\bar{\tilde{\sigma}}_1-\tilde{\sigma}_1)^2 + \cdots + (\bar{\tilde{\sigma}}_6-\tilde{\sigma}_6)^2 } = 3.78\times 10^{-6} < || \ten{E} ||_F,
\label{ex:Mirsky}
\end{equation}
and
\begin{equation}
|\bar{\tilde{\sigma}}_i-\tilde{\sigma}_i | \leq ||\bar{\ten{E}} ||_2 \quad (i=1,\ldots,6).
\label{ex:Weyl}
\end{equation}
These two inequalities \eqref{ex:Mirsky} and \eqref{ex:Weyl} are very reminiscent of Mirsky's and Weyl's Theorem \cite{Stewart90perturbationtheory}, respectively, for the perturbation of singular values for matrices.

\subsection{Example 5: Gradual decrease of intermediate singular value products}
In the discussion on reducing the total number of required SVDs it was shown that the product of the singular values along a branch becomes smaller and smaller for every additional level. In this example we demonstrate this gradual decrease for a random $2\times 2 \times 2 \times 2 \times 2$ tensor where each entry is drawn from a zero mean Gaussian distribution with variance 1. The TTr1 decomposition always has 16 rank-1 terms. Figure \ref{fig:sigdecrease} shows the intermediate singular value products $\sigma_i \sigma_{ij} \cdots \sigma_{ij\cdots m}$ as a function of the level for $\tilde{\sigma}_1,\tilde{\sigma}_8,\tilde{\sigma}_{13},\tilde{\sigma}_{16}$. On the figure it can be seen that the intermediate singular value products indeed decrease as the level increases. The TTr1-tree for this tensor is a binary tree. Each SVD of a $v$ vector therefore produces 2 singular values. It is consistently observed that of the two singular values of $\bar{v}$, one is very close to unity, with values around $0.8$ or $0.9$. The other singular value typically has values around $0.5$. Branches of the tree that mostly choose the singular value close to unity therefore exhibit a very slow decrease while branches that predominantly choose the smaller singular value decrease faster. This is seen in Figure \ref{fig:sigdecrease} as a bigger descent of the intermediate products of $\tilde{\sigma_8},\tilde{\sigma}_{16}$ compared to $\tilde{\sigma}_1,\tilde{\sigma}_{13}$.

\begin{figure}[ht]
\centering
\includegraphics[width=.9\textwidth]{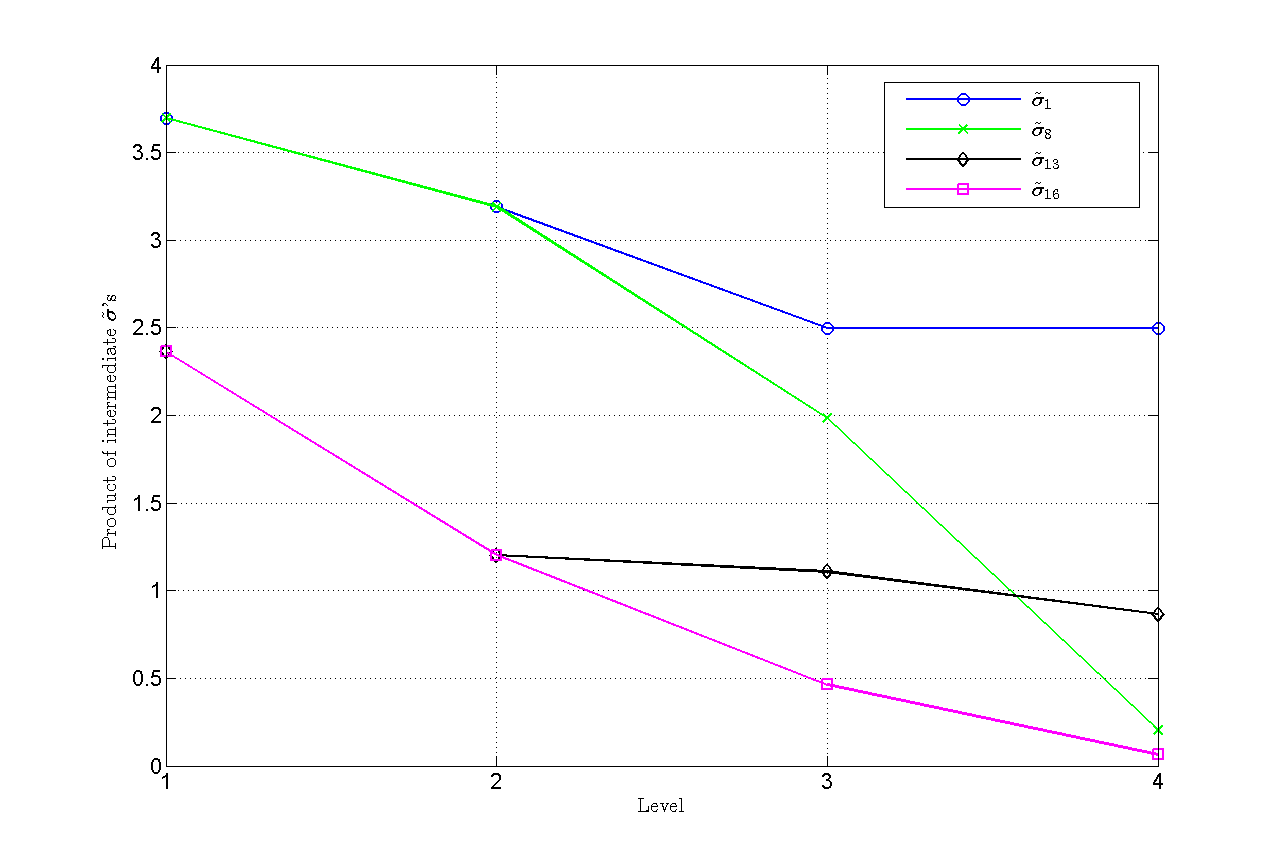}
\caption{Gradual decrease of intermediate singular value products as a function of the level.}
\label{fig:sigdecrease}
\end{figure}

\subsection{Example 6: Exponential decaying singular values}
In this example we illustrate the computation of an approximation $\tilde{\ten{A}}$ using Lemma \ref{lem:errorlem} when the singular values $\tilde{\sigma{}}$ decay exponentially. Consider the tensor $\ten{A} \in \mathbb{R}^{5 \times 5 \times 5}$ with
$$
\ten{A}_{i_1i_2i_3} \;=\; \frac{1}{i_1+i_2+i_3},
$$
which has very smoothly decaying singular values as shown in Figure \ref{fig:ex6}. There are a total number of 25 rank-1 terms in the TTr1 decomposition. Suppose we are interested in obtaining an approximation $\tilde{\ten{A}}$ such that $\||\ten{A}-\tilde{\ten{A}}||_F \leq 10^{-6}$. Sorting the rank-1 terms by descending singular values and using Lemma \ref{lem:errorlem}, the approximation would then consist of 17 terms since
$$
 \tilde{\sigma}_{16} =  \num{3.18}\times 10^{-6} \geq \tilde{\sigma}_{17} =  \num{1.18}\times 10^{-6} \geq \num{1.00}\times 10^{-6} \geq \tilde{\sigma}_{18} = \num{9.00}\times 10^{-7}.
$$
The approxi-rank gap $\tilde{\sigma}_{17}/\tilde{\sigma}_{18}=1.31$, which indicates that there is no clear ``gap" between $\tilde{\sigma}_{17}$ and $\tilde{\sigma}_{18}$. In contrast, the tensor in Example 1 has an approxi-rank gap of $\tilde{\sigma}_{4}/\tilde{\sigma}_{5} \approx 10^{14}$. Also note that it is not possible to reduce the number of SVDs during execution of the TTr1SVD algorithm since none of the first five computed singular values $\sigma_1,\ldots,\sigma_5$ satisfy condition \eqref{eq:reducecond}, with $\sigma_{k0}=1$. Next, the approximations obtained from the TTr1 decomposition and CANDECOMP of this tensor are compared for increasing rank. The CANDECOMP was computed over 10 trials with different initial guesses using the CP-ALS method. The absolute errors in terms of the rank are listed in Table \ref{tab:ecomp}. For the CANDECOMP case the mean absolute error over the 10 trials is reported. From Table \ref{tab:ecomp} it is seen that the errors are almost identical up to the first 24 terms. Since the TTr1 decomposition consists of 25 terms, the error drops at that term to the order of the machine precision, while the ALS-CP method fails to produce any significant improvement in the error. Even when a CANDECOMP of 100 rank-1 terms are computed, the average absolute error is around $10^{-12}$. 

\begin{figure}[ht]
\centering
\includegraphics[width=.9\textwidth]{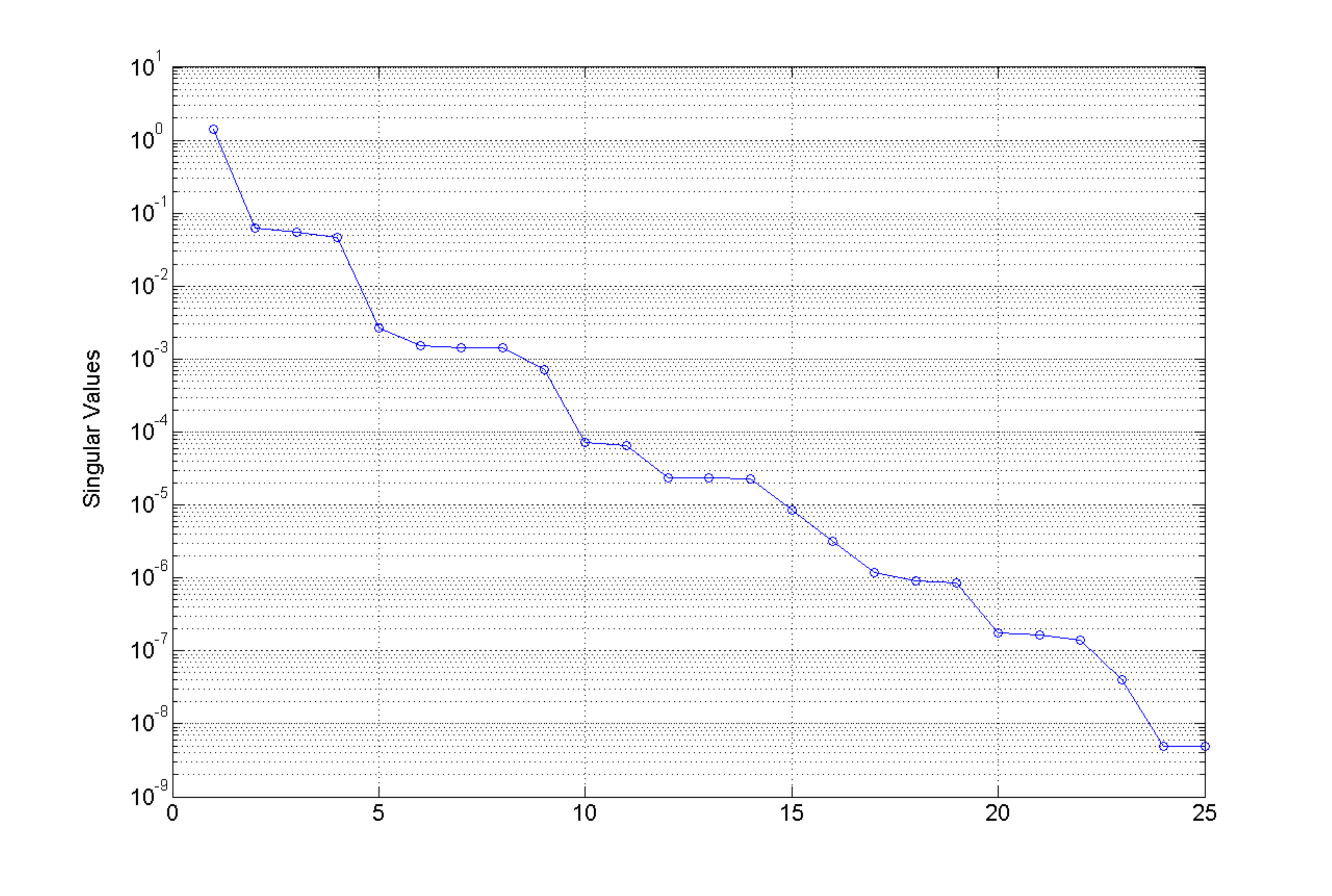}
\caption{Singular value decay of the function-generated tensor $\ten{A}_{i_1i_2i_3} \;=\; 1/(i_1+i_2+i_3)$.}
\label{fig:ex6}
\end{figure}

\begin{table}[ht]
\centering
\caption{Errors $||\ten{A}-\tilde{\ten{A}}||_F$ of ALS-CP and TTr1SVD for increasing rank $R$.}%\vspace{3pt}
\label{tab:ecomp} % is used to refer this table in the text
\begin{tabular}{@{}lrrrrrrr@{}}
Rank & 1 &  5 & 10 & 15 & 20 & 25 \\\midrule
TTr1SVD & \num{9.5e-2}&\num{2.6e-3}&\num{7.6e-5}&\num{3.6e-6}&\num{2.2e-7}&\num{9.9e-16}\\
ALS-CP & \num{9.5e-2}&\num{1.6e-3}&\num{3.6e-5}&\num{4.5e-6}&\num{1.2e-7}&\num{1.6e-7}
\end{tabular}
\end{table}

\section{Conclusion}
\label{sec:conclusions}
In this paper, a constructive TTr1 decomposition algorithm, named TTr1SVD, was proposed to decompose high-order real tensors into a finite sum of real orthogonal rank-1 outer products. Compared to existing CP approaches, the TTr1 decomposition has many favorable properties such as uniqueness, easy quantification of the approximation error, and an easy conversion to the Tucker format with a sparse core tensor. A complete characterization of all tensors orthogonal to the original tensor was also provided for the first time, which is readily available via the TTr1SVD and easily visualized by an outer product column table. As an application example, this outer product column table was used to provide an elegant constructive proof of the maximum rank-3 property of $2\times 2\times 2$ tensors over the real field. Numerical examples verified and demonstrated the favorable properties of TTr1SVD in decomposing and analyzing real tensors. 

\section*{Acknowledgements}
The authors would like to thank the Associate Editor Pierre Comon and the anonymous referees for the many constructive comments.

 \bibliographystyle{siam}
% argument is your BibTeX string definitions and bibliography database(s)
 \bibliography{references.bib}

\end{document}